\numberwithin{equation}{section}
\newcommand{\CC}{\mathbb{C}}
\newcommand{\RR}{\mathbb{R}}
\newcommand{\ZZ}{\mathbb{Z}}
\newcommand{\D}{\mathcal{D}}
\newcommand{\LL}{\mathcal{L}}
\newcommand{\M}{\mathcal{M}}
\newcommand{\N}{\mathcal{N}}
\newcommand{\Lin}{{\rm Lin}}
\renewcommand{\dim}{{\rm dim}}
\newcommand{\Vol}{{\rm Vol}}
\newcommand{\e}{\varepsilon}
\newcommand{\Spec}{{\rm Spec}}
\newcommand{\supp}{{\rm supp}}
\newcommand{\Int}{{\rm Int}}
\newcommand{\Db}{{\bf D}^{b}}
\newcommand{\Dbc}{{\bf D}_{c}^{b}}
\newcommand{\tl}[1]{\widetilde{#1}}
\newcommand{\simto}{\overset{\sim}{\longrightarrow}}
\newcommand{\dsum}{\displaystyle \sum}
\newtheorem{definition}{Definition}[section]
\newtheorem{theorem}[definition]{Theorem}
\newtheorem{proposition}[definition]{Proposition}
\newtheorem{lemma}[definition]{Lemma}
\newtheorem{corollary}[definition]{Corollary}
\newtheorem{remark}[definition]{Remark}
\title{On vanishing theorems for local systems 
associated to Laurent polynomials 
\footnote{{\bf 2010 Mathematics 
Subject Classification: }14M25, 
32S22, 32S60, 
33C70, 52C35}}
\author{Alexander ESTEROV 
\footnote{ National Research University 
Higher School of Economics \newline Faculty 
of Mathematics NRU HSE, 7 Vavilova 117312 
Moscow, Russia. 
E-mail: aesterov@hse.ru \newline 
Partially supported by RFBR 
grant 16-01-00409.} 
and Kiyoshi TAKEUCHI 
\footnote{Institute of Mathematics, University  of 
Tsukuba, 1-1-1, Tennodai, 
Tsukuba, Ibaraki, 305-8571, Japan. 
E-mail: takemicro@nifty.com 
TEL: 029-853-7650(Japan) FAX: 
029-853-6501(Japan) } 
}
\date{}
\begin{document}

\maketitle

\begin{abstract}
We prove some vanishing theorems 
for the cohomology groups of 
local systems associated to 
Laurent polynomials. 
In particular, we extend one of 
the results of 
Gelfand-Kapranov-Zelevinsky 
\cite{G-K-Z-2} to various 
directions. In the course of the proof, 
some properties of vanishing cycles of perverse 
sheaves and twisted Morse theory 
will be used. 
\end{abstract}

{\bf Keywords: local systems, hypergeometric 
functions, twisted Morse theory, perverse sheaves, 
toric varieties}

\section{Introduction}\label{sec:1}

The study of the cohomology groups of 
local systems is an important subject in 
algebraic geometry, hyperplane arrangements, 
topology and hypergeometric 
functions of several variables. 
Many mathematicians are interested 
in the conditions for 
which we have their concentrations  
in the middle degrees (for a review of this 
subject, see for example \cite[Section 6.4]{Dimca}). 
Here let us consider this problem in the 
following situation. 
Let $B=\{ b(1), b(2), \ldots , 
b(N)\} \subset \ZZ^{n-1}$ be a finite subset 
of the lattice $\ZZ^{n-1}$. 
Assume that the affine lattice generated by 
$B$ in $\ZZ^{n-1}$ coincides with $\ZZ^{n-1}$. 
For $z=(z_1, \ldots, z_N) \in \CC^N$ 
we consider Laurent polynomials $P(x)$ 
on the algebraic torus 
$T_0=( \CC^*)^{n-1}$ defined by 
$P(x)=\sum_{j=1}^N z_j x^{b(j)}$ 
($x=(x_1, \ldots, x_{n-1}) 
\in T_0=( \CC^*)^{n-1}$). 
Then for $c=(c_1, \ldots, c_n) \in \CC^n$ 
we obtain a possibly multi-valued function 
$P(x)^{-c_n} 
x_1^{c_1-1} \cdots x_{n-1}^{c_{n-1}-1}$ on 
$W=T_0 \setminus P^{-1}(0)$. 
It generates the 
rank one local system 
\begin{equation} 
\LL = \CC_{W} 
P(x)^{-c_n} 
x_1^{c_1-1} \cdots x_{n-1}^{c_{n-1}-1} 
\end{equation}
on $W$.  Under the nonresonance 
condition (see Definition \ref{NRC}) on 
$c \in \CC^n$, 
Gelfand-Kapranov-Zelevinsky 
\cite{G-K-Z-2} proved that 
we have the concentration
\begin{equation} 
H^j(W ; \LL ) \simeq 
0 \qquad (j \not= n-1)
\end{equation}
for non-degenerate Laurent polynomials $P(x)$. 
This result was obtained as a byproduct 
of their study on the integral 
representations of $A$-hypergeometric 
functions in \cite{G-K-Z-2}. Since 
their proof of this concentration 
heavily relies on 
the framework of the $\D$-module 
theory, it is desirable to 
prove it more directly. 
In this paper, by applying the 
twisted Morse theory to perverse sheaves 
we extend the result of 
Gelfand-Kapranov-Zelevinsky 
to various directions. 
First in Theorem \ref{VTM} 
we relax the non-degeneracy 
condition on $P(x)$ by replacing it 
with a weaker one (see Definition 
\ref{WND}). We thus  
extend the result of \cite{G-K-Z-2} 
to the case where the hypersurface $P^{-1}(0) 
\subset T_0$ may have isolated 
singular points in $T_0$. 
In fact, in Theorem \ref{VTM} 
we relax also the condition that 
$B$ generates $\ZZ^{n-1}$ to a weaker one that 
the dimension of the convex 
hull $\Delta \subset \RR^{n-1}$ of 
$B$ in $\RR^{n-1}$ is $n-1$. In Theorem \ref{MVTM} 
we extend these results 
to more general local systems 
associated to several Laurent polynomials. 
Namely we obtain a vanishing theorem for 
arrangements of toric hypersurfaces 
with isolated singular points. Our proofs 
of Theorems \ref{VTM} and \ref{MVTM} 
are very natural and obtained by taking 
(possibly singular) ``minimal" toric 
compactifications of $T_0$. In order to work 
on such singular varieties, we use 
our previous idea in the proof of 
\cite[Lemma 4.2]{E-T-2}. 
See Section \ref{sec:3} for the details. 
Moreover in Theorem \ref{NTM} 
(assuming the non-degeneracy of 
Gelfand-Kapranov-Zelevinsky \cite{G-K-Z-2} 
for Laurent polynomials) we relax 
the nonresonance condition of $c \in \CC^n$  
in Theorem \ref{MVTM} by replacing it with the 
much weaker one $c \notin \ZZ^n$. 
To prove Theorem \ref{NTM}, we first perturb  
Laurent polynomials by multiplying 
monomials. Then we apply the 
twisted Morse theory to the 
real-valued functions 
associated to them 
by using some standard properties 
of vanishing cycles of perverse 
sheaves. See Sections \ref{sec:4} 
and \ref{sec:5} for the details. 
In the course of the proof of Theorem \ref{NTM}, 
we obtain also the following result which 
might be of independent interest. 
Let $Q_1, \ldots, Q_l$ be 
Laurent polynomials on $T=( \CC^*)^{n}$ 
and for $1 \leq i \leq l$ denote by 
$\Delta_i \subset \RR^{n}$ the Newton 
polytope $NP(Q_i)$ of $Q_i$. 
Set $\Delta = \Delta_1 + \cdots + \Delta_l$.

\begin{theorem}\label{NNVTM} 
Let $\LL$ be a non-trivial local system 
of rank one on $T=( \CC^*)^{n}$. 
Assume that for any $1 \leq i \leq l$ 
we have $\dim \Delta_i =n$ and the subvariety 
\begin{equation} 
Z_i = \{ x \in T \ | \ 
Q_1(x)= \cdots = Q_i(x)=0 \} \subset T 
\end{equation} 
of $T$ is a non-degenerate complete 
intersection. 
Then for any $1 \leq i \leq l$ we have the 
concentration 
\begin{equation} 
H^j(Z_i ; \LL ) \simeq 
0 \qquad (j \not= n-i). 
\end{equation} 
Moreover we have 
\begin{equation}
\dim H^{n-i} (Z_i ; \LL ) =  
\dsum_{\begin{subarray}{c} 
m_1,\ldots,m_i \geq 1\\ 
m_1+\cdots +m_i=n 
\end{subarray}}\Vol_{\ZZ}(
\underbrace{\Delta_1,
\ldots,\Delta_1}_{\text{
$m_1$-times}},\ldots, 
\underbrace{\Delta_i,
\ldots,\Delta_i}_{\text{$m_i$-times}}),  
\end{equation}
where $\Vol_{\ZZ}(\underbrace{\Delta_1,
\ldots,\Delta_1}_{\text{$m_1$-times}},
\ldots,\underbrace{\Delta_i,\ldots,
\Delta_i}_{\text{$m_i$-times}})\in \ZZ$ 
is the normalized $n$-dimensional mixed volume 
with respect to the lattice $\ZZ^n 
\subset \RR^n$
\end{theorem}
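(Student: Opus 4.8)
The plan is to proceed by induction on $i$, using at each step the known vanishing theorem for a single non-degenerate hypersurface (the case $l=1$, which is essentially Theorem \ref{MVTM}/the relative version for non-degenerate $Q_i$) together with a Lefschetz-type argument. For the base case $i=1$, the statement is that $H^j(Z_1;\LL)\simeq 0$ for $j\neq n-1$ and $\dim H^{n-1}(Z_1;\LL)=\Vol_{\ZZ}(\underbrace{\Delta_1,\ldots,\Delta_1}_{n})$; this is precisely the non-trivial local system version of the vanishing theorem already proved in the paper for a single non-degenerate toric hypersurface, so I would simply cite Theorem \ref{VTM} (or its natural generalization to $T=(\CC^*)^n$). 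The inductive hypothesis is that $H^j(Z_{i-1};\LL)\simeq 0$ for $j\neq n-i+1$ with the stated mixed-volume formula for the middle cohomology.

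For the inductive step, consider the closed embedding $Z_i=Z_{i-1}\cap Q_i^{-1}(0)\hookrightarrow Z_{i-1}$ and the open complement $U_i=Z_{i-1}\setminus Q_i^{-1}(0)=Z_{i-1}\cap\{Q_i\neq 0\}$. The key input is that $U_i$ is the complement of a non-degenerate hypersurface inside the non-degenerate complete intersection $Z_{i-1}$, so I expect a vanishing statement $H^j(U_i;\LL)\simeq 0$ for $j\neq n-i+1$ as well — this should follow from the same twisted Morse theory / perverse sheaf techniques used elsewhere in the paper, applied now to the function $Q_i$ restricted to the (smooth) affine variety $U_i$ of dimension $n-i+1$, using that $\LL$ is non-trivial. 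Granting this, the long exact sequence of the pair, or equivalently the Gysin triangle relating $H^\bullet(Z_{i-1})$, $H^\bullet(U_i)$, and $H^\bullet(Z_i)$ (with a shift), forces $H^j(Z_i;\LL)$ to vanish outside degree $n-i$, and identifies $\dim H^{n-i}(Z_i;\LL)$ as $\dim H^{n-i+1}(U_i;\LL)-\dim H^{n-i+1}(Z_{i-1};\LL)$.

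The numerical part is then a computation of Euler characteristics combined with the concentration. Since $\LL$ is a rank-one local system, $\chi(Z_i;\LL)=\chi(Z_i)$ by the standard fact that Euler characteristics are insensitive to twisting by rank-one (indeed any) local systems on a fixed space; and for a non-degenerate complete intersection in the torus the Euler characteristic is given by the Bernstein-Khovanskii-Kushnirenko formula, $(-1)^{n-i}\chi(Z_i)=\sum_{m_1+\cdots+m_i=n,\ m_j\geq 1}\Vol_{\ZZ}(\Delta_1^{m_1},\ldots,\Delta_i^{m_i})$. Combining this with the concentration in degree $n-i$ gives the stated formula for $\dim H^{n-i}(Z_i;\LL)$. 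Alternatively one can extract the same formula by peeling off the mixed-volume identity $\Vol_{\ZZ}(\ldots)$ for $U_i$ versus $Z_{i-1}$ termwise, but the Euler-characteristic route is cleaner.

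The main obstacle I anticipate is the vanishing statement for the open variety $U_i=Z_{i-1}\cap\{Q_i\neq 0\}$: one must verify that restricting $\LL$ there, together with the function $Q_i$, still satisfies the hypotheses needed to run the twisted Morse-theoretic argument — in particular that the pencil defined by $Q_i$ on $Z_{i-1}$ is generic enough off its zero locus, and that the non-degeneracy of the complete intersection $Z_i$ is exactly what guarantees the critical points of the associated real-valued function are suitably isolated and of the right index. This is where the Newton-polytope non-degeneracy conditions $\dim\Delta_i=n$ enter essentially, and handling the behavior near the toric boundary (as in the proof of Theorem \ref{VTM}) will require the same "minimal toric compactification" device referenced in the introduction. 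Once that vanishing is in place, the rest is formal.
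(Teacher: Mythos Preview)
Your approach has two real gaps. First, the base case: Theorems~\ref{VTM} and~\ref{MVTM} concern $H^*$ of the \emph{complement} $T_0\setminus P^{-1}(0)$ with a local system carrying a factor $P^{-c_n}$; they say nothing about $H^*(Z_1;\LL)$ for $\LL$ pulled back from the torus. The paper instead starts the induction at $i=0$, where $Z_0=T$ and the vanishing is immediate because $\LL$ is non-trivial. Second, and more seriously: even granting your claimed concentration $H^j(U_i;\LL)=0$ for $j\neq n-i+1$ (which you rightly flag as the crux and do not prove), the Gysin sequence for $Z_i\subset Z_{i-1}$ only yields $H^j(Z_i;\LL)=0$ for $j\notin\{n-i-1,\,n-i\}$, and identifies $H^{n-i-1}(Z_i;\LL)$ with $\ker\bigl(H^{n-i+1}(Z_{i-1};\LL)\to H^{n-i+1}(U_i;\LL)\bigr)$. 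Artin vanishing handles only degrees $>n-i$, so you have no mechanism to kill degree $n-i-1$; your dimension formula silently assumes this kernel is zero.

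The paper's inductive step avoids $U_i$ entirely. Using Corollary~\ref{NCR} one chooses $a_i\in\Int(\Delta_i)$ so that the real-valued function $g_i(x)=|Q_i(x)\,x^{-a_i}|$ on $Z_{i-1}$ has only non-degenerate critical points, each of Morse index $\dim Z_{i-1}=n-i+1$. A smooth toric compactification together with Zaharia's tubular-neighborhood result shows that the sublevel sets $\{g_i<t\}$ interpolate between $Z_i$ (small $t$) and $Z_{i-1}$ (large $t$), and crossing a critical value can change only $H^{n-i}$ and $H^{n-i+1}$. Starting from the inductive concentration of $H^*(Z_{i-1};\LL)$ in degree $n-i+1$, one obtains $H^j(Z_i;\LL)=0$ for $j\notin\{n-i,\,n-i+1\}$, and \emph{now} Artin kills the extraneous degree $n-i+1$. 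The essential point is that the Morse interpolation pushes the ambiguous degree \emph{up} to where Artin applies, whereas your Gysin route pushes it \emph{down} to where it does not. Once concentration is established, the dimension formula follows from Theorem~\ref{thm:2-14} exactly as you say.
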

Note that this result can be 
considered as a refinement of 
the classical Bernstein-Khovanskii-Kushnirenko 
theorem (see \cite{Khovanskii}). 
On the other hand, Matusevich-Miller-Walther 
\cite{M-M-W} and Saito-Sturmfels-Takayama 
\cite{S-S-T} studied the condition on 
the parameter vector $c \in \CC^n$ 
for which the corresponding local 
system of $A$-hypergeometric 
functions is non-rank-jumping. They also 
relaxed the nonresonance condition of 
$c \in \CC^n$. It would be an 
interesting problem to study the 
relationship between Theorem \ref{NTM} 
and their results.

\bigskip
\noindent{\bf Acknowledgement:} 
We express our hearty gratitude to 
Professor N. Takayama for drawing 
our attention to this problem. 
Moreover some discussions with 
Professor M. Yoshinaga were very 
useful during the preparation 
of this paper. We also thank the referee 
for giving us many valuable suggestions.

\section{Preliminary results}\label{sec:2}

In this section, we recall basic 
notions and results which will be used 
in this paper. In this paper, we essentially 
follow the terminology of 
\cite{Dimca}, \cite{H-T-T} etc. 
For example, for a topological 
space $X$ we denote by $\Db(X)$ the 
derived category whose objects are 
bounded complexes of sheaves 
of $\CC_X$-modules on $X$. 
Denote by $\Dbc(X)$ the full 
subcategory of $\Db(X)$ consisting of 
constructible objects. 
Let $\Delta \subset \RR^n$ be a lattice polytope 
in $\RR^n$. For an element $u \in \RR^n$ of 
(the dual vector space of) $\RR^n$ we define the 
supporting face $\gamma_u \prec  \Delta$ 
of $u$ in $ \Delta$ by 
\begin{equation}
\gamma_u = \left\{ v \in \Delta \ | \ 
\langle u , v \rangle 
= 
\min_{w \in \Delta } 
\langle u ,w \rangle \right\}, 
\end{equation}
where for $u=(u_1,\ldots,u_n)$ 
and $v=(v_1,\ldots, v_n)$ we set 
$\langle u,v\rangle =\sum_{i=1}^n u_iv_i$. 
For a face $\gamma$ of $\Delta$ set 
\begin{equation}
\sigma (\gamma) = \overline{ \{ u \in \RR^n \ | \ 
\gamma_u = \gamma   \} } \subset \RR^n . 
\end{equation}
\noindent Then  $\sigma (\gamma )$ 
is an $(n- \dim \gamma )$-dimensional 
rational convex polyhedral 
cone in $\RR^n$. Moreover 
the family $\{ \sigma (\gamma ) \ | \ 
\gamma \prec  \Delta \}$ of cones in $\RR^n$ 
thus obtained is a subdivision of $\RR^n$. 
We call it the dual subdivision of $\RR^n$ by 
$\Delta$. If $\dim \Delta =n$ it 
satisfies the axiom 
of fans (see \cite{Fulton} and 
\cite{Oda} etc.). We call it the dual fan of 
$\Delta$. 

Let $\Delta_1, \ldots, \Delta_p  
\subset \RR^n$ be lattice polytopes 
in $\RR^n$ and $\Delta = 
\Delta_1 + \cdots + \Delta_p  
\subset \RR^n$ their Minkowski sum. 
For a face $\gamma \prec \Delta$ of 
$\Delta$, by taking a point $u \in \RR^n$ 
in the relative interior of its dual cone $\sigma (\gamma)$ 
we define the supporting face 
$\gamma_i \prec \Delta_i$ of $u$ in $\Delta_i$. 
Then it is easy to see that 
$\gamma = \gamma_1 + \cdots + \gamma_p$. 
Now we recall Bernstein-Khovanskii-Kushnirenko's 
theorem \cite{Khovanskii}.

\begin{definition}
Let $g(x)=\sum_{v \in \ZZ^n} c_vx^v$ be a 
Laurent polynomial on the algebraic torus 
$T=(\CC^*)^n$ ($c_v\in \CC$). 
\begin{enumerate}
\item We call the convex hull of 
$\supp(g):=\{v\in \ZZ^n \ | \ c_v\neq 0\} 
\subset \ZZ^n \subset \RR^n$ in $\RR^n$ the 
Newton polytope of $g$ and denote it by $NP(g)$.
\item For a face $\gamma \prec NP(g)$ of $NP(g)$, 
we define the $\gamma$-part 
$g^{\gamma}$ of $g$ by 
$g^{\gamma}(x):=\sum_{v \in \gamma} c_vx^v$. 
\end{enumerate}
\end{definition}

\begin{definition}\label{non-deg} (see \cite{khov0}, \cite{Oka} etc.) 
Let $g_1, g_2, \ldots , g_p$ be 
Laurent polynomials on $T=(\CC^*)^n$. 
Set $\Delta_i=NP(g_i)$ $(i=1,\ldots, p)$ and 
$\Delta = \Delta_1 + \cdots + \Delta_p$. 
Then we say that the subvariety 
$Z=\{ x\in T=(\CC^*)^n \ | \ g_1(x)=g_2(x)= 
\cdots =g_p(x)=0 \}$ of $T=(\CC^*)^n$ is a 
non-degenerate complete intersection 
if for any face $\gamma \prec \Delta$ of 
$\Delta$ the $p$-form $dg_1^{\gamma_1} \wedge 
dg_2^{\gamma_2} \wedge 
\cdots \wedge dg_p^{\gamma_p}$ does not vanish 
on $\{ x\in T=(\CC^*)^n  \ | \ 
g_1^{\gamma_1}(x)= \cdots =
g_p^{\gamma_p}(x)=0 \}$.
\end{definition}

\begin{definition}\label{rem:2-13}
Let $\Delta_1,\ldots,\Delta_n$ 
be lattice 
polytopes in $\RR^n$. Then 
their normalized $n$-dimensional 
mixed volume 
$\Vol_{\ZZ}( \Delta_1,\ldots,\Delta_n) 
\in \ZZ$ is defined by the formula 
\begin{equation}
\Vol_{\ZZ}( \Delta_1, \ldots , \Delta_n)=
\frac{1}{n!} 
\dsum_{k=1}^n (-1)^{n-k} 
\sum_{\begin{subarray}{c}I\subset 
\{1,\ldots,n\}\\ \sharp I=k\end{subarray}}
\Vol_{\ZZ}\left(
\dsum_{i\in I} \Delta_i \right)
\end{equation}
where $\Vol_{\ZZ}(\ \cdot\ )
= n! \Vol (\ \cdot\ ) \in \ZZ$ is 
the normalized $n$-dimensional volume 
with respect to the lattice $\ZZ^n 
\subset \RR^n$.
\end{definition}

\begin{theorem}\label{BKK} 
( \cite{Khovanskii} )\label{thm:2-14}
Let $g_1, g_2, \ldots , g_p$ be 
Laurent polynomials on $T=(\CC^*)^n$. 
Assume that the subvariety $Z
=\{ x\in T=(\CC^*)^n \ | \ g_1(x)=g_2(x)= 
\cdots =g_p(x)=0 \}$ of $T=(\CC^*)^n$ is a 
non-degenerate complete intersection. 
Set $\Delta_i=NP(g_i)$ $(i=1,\ldots, p)$. Then we have
\begin{equation}
\chi(Z)=(-1)^{n-p} 
\dsum_{\begin{subarray}{c} 
m_1,\ldots,m_p \geq 1\\ m_1+\cdots +m_p=n 
\end{subarray}}\Vol_{\ZZ}(
\underbrace{\Delta_1,\ldots,\Delta_1}_{\text{
$m_1$-times}},\ldots, 
\underbrace{\Delta_p,
\ldots,\Delta_p}_{\text{$m_p$-times}}),
\end{equation}
where $\Vol_{\ZZ}(\underbrace{\Delta_1,
\ldots,\Delta_1}_{\text{$m_1$-times}},
\ldots,\underbrace{\Delta_p,\ldots,
\Delta_p}_{\text{$m_p$-times}})\in \ZZ$ 
is the normalized $n$-dimensional mixed volume 
with respect to the lattice $\ZZ^n 
\subset \RR^n$.
\end{theorem}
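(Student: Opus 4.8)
\emph{Proof proposal.} The statement is the Bernstein-Khovanskii-Kushnirenko theorem, and the plan has three stages: (i) show that $\chi(Z)$ depends only on the polytopes $\Delta_1,\ldots,\Delta_p$, so that we may assume the coefficients of the $g_i$ to be generic; (ii) prove the hypersurface case $p=1$ (Kushnirenko's theorem) by passing to a toric compactification adapted to $NP(g)$; (iii) reduce the case of general $p$ to $p=1$ by the Cayley trick, and convert the resulting normalized volume of the Cayley polytope into the advertised sum of mixed volumes. The only properties of the Euler characteristic used below are its additivity over finite partitions into locally closed subvarieties and its multiplicativity along locally trivial fibrations (recall that on complex algebraic varieties $\chi$ agrees with the one with compact supports). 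We may assume $\dim\Delta=n$: otherwise $Z$ is, up to a monomial change of coordinates, a product with a torus of positive dimension, so $\chi(Z)=0$, and every mixed volume on the right-hand side vanishes too; the same remark disposes of the degenerate subfamilies occurring in the induction of stage (iii).

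\emph{Stage (i).} Fix a smooth projective toric variety $X_\Sigma$ whose fan refines the common refinement of the dual fans of $\Delta_1,\ldots,\Delta_p$. For a tuple $(g_1,\ldots,g_p)$ with $\supp(g_i)\subset\Delta_i\cap\ZZ^n$, let $\overline{Z}_i\subset X_\Sigma$ be the zero locus of the section of the line bundle attached to $\Delta_i$ determined by $g_i$, and $\overline{Z}=\overline{Z}_1\cap\cdots\cap\overline{Z}_p$. Being a non-degenerate complete intersection is exactly the condition that $\overline{Z}$ be smooth of codimension $p$ and meet every torus orbit of $X_\Sigma$ transversally; this is a Zariski-open condition on the coefficients, so the non-degenerate locus is connected, and over it the family $\overline{Z}$ is smooth and proper with all fibres meeting all orbits transversally. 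By Ehresmann's theorem this family, hence also the open part $Z=\overline{Z}\cap(\CC^*)^n$, is locally trivial, so $\chi(Z)$ is locally constant, hence constant, on the non-degenerate locus. From now on the coefficients are assumed generic.

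\emph{Stage (ii): the hypersurface case.} Let $\Delta=NP(g)$, $\dim\Delta=n$, and take $X_\Sigma$ adapted to $\Delta$. The closure $\overline{Z}$ of $Z=\{g=0\}$ is smooth and transverse to every orbit $O_\sigma$, and $\overline{Z}\cap O_\sigma$ is a non-degenerate hypersurface of $O_\sigma\cong(\CC^*)^{\dim O_\sigma}$ with Newton polytope the face of $\Delta$ supported by $\sigma$. Compute $\chi(\overline{Z})$ in two ways. First, additivity over the orbit stratification gives $\chi(\overline{Z})=\chi(Z)+\sum_{\sigma\neq 0}\chi(\overline{Z}\cap O_\sigma)$, in which every term with $\sigma\neq 0$ is, by induction on dimension, the Euler characteristic of a non-degenerate hypersurface attached to a proper face of $\Delta$. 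Second, $\chi(\overline{Z})=\int_{\overline{Z}}c_{n-1}(T\overline{Z})$, and the adjunction formula, together with the presentation of the Chow ring of $X_\Sigma$ and the identity $\int_{X_\Sigma}D_{\Delta_1}\cdots D_{\Delta_n}=\Vol_{\ZZ}(\Delta_1,\ldots,\Delta_n)$ for nef toric divisors, evaluates this integral as an explicit combination of normalized volumes of faces of $\Delta$. Equating the two expressions and solving the resulting system — triangular with respect to the face poset of $\Delta$ — yields $\chi(Z)=(-1)^{n-1}\Vol_{\ZZ}(\Delta,\ldots,\Delta)$.

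\emph{Stage (iii): the Cayley trick.} For $p\geq 2$, set $\widetilde{g}(x,t)=g_1(x)+t_1g_2(x)+\cdots+t_{p-1}g_p(x)$ on $(\CC^*)^n\times(\CC^*)^{p-1}$. For generic coefficients $\widetilde{g}$ is non-degenerate, with Newton polytope the Cayley polytope $C=\mathrm{conv}\bigl(\bigcup_{i=1}^p(\Delta_i\times\{e_{i-1}\})\bigr)$ (where $e_0=0$ and $e_1,\ldots,e_{p-1}$ is the standard basis), of dimension $n+p-1$. Apply stage (ii) to $\widetilde{g}$ and project $\widetilde{Y}=\{\widetilde{g}=0\}$ to $(\CC^*)^n$: over the stratum on which $g_i(x)=0$ precisely for $i$ in a fixed subset $J$, the fibre is an intersection of affine hyperplanes with $(\CC^*)^{p-1}$ whose Euler characteristic depends only on $J$ and is computed once and for all. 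Combining this with additivity of $\chi$, inclusion-exclusion over the subvarieties $Z_I=\{g_i=0\ (i\in I)\}$, and the induction hypothesis on $p$ applied to the $Z_I$ with $|I|<p$ (each again a non-degenerate complete intersection with generic coefficients), one solves for $\chi(Z)$ in terms of $\Vol_{\ZZ}(C,\ldots,C)=(n+p-1)!\,\vol(C)$ and mixed volumes of proper subfamilies of the $\Delta_i$. Finally, integrating over the standard $(p-1)$-simplex — the slice of $C$ above the point with barycentric coordinates $(\lambda_1,\ldots,\lambda_p)$ is $\lambda_1\Delta_1+\cdots+\lambda_p\Delta_p$ — and using multilinearity and symmetry of the mixed volume gives
\[
(n+p-1)!\,\vol(C)=\dsum_{\begin{subarray}{c}m_1,\ldots,m_p\geq 0\\ m_1+\cdots+m_p=n\end{subarray}}\Vol_{\ZZ}(\underbrace{\Delta_1,\ldots,\Delta_1}_{m_1\text{-times}},\ldots,\underbrace{\Delta_p,\ldots,\Delta_p}_{m_p\text{-times}}),
\]
in which the terms with some $m_i=0$ are exactly the contributions already accounted for by the proper subfamilies. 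After cancellation one is left precisely with $\chi(Z)=(-1)^{n-p}$ times the sum over $m_1,\ldots,m_p\geq 1$, which is the assertion.

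\emph{The main obstacle.} Everything combinatorial above — intersection numbers of nef divisors on $X_\Sigma$, the face recursion of stage (ii), the Cayley-polytope volume identity, and the manipulations with mixed volumes — is routine. The single substantial geometric ingredient, used in all three stages, is the existence of a toric compactification adapted to the given Newton data in which the relevant closures are smooth and meet every orbit transversally (and the auxiliary projections are locally trivial away from finite sets). This is exactly the content of the definition of a non-degenerate complete intersection: the faces $\gamma=\gamma_1+\cdots+\gamma_p$ of $\Delta$ index the torus orbits of any adapted $X_\Sigma$, the $\gamma$-parts $g_i^{\gamma_i}$ describe $g_i$ near the corresponding orbit, and the non-vanishing of $dg_1^{\gamma_1}\wedge\cdots\wedge dg_p^{\gamma_p}$ on their common zero locus is precisely the transversality needed, verified by a stratified implicit-function argument near the toric boundary.
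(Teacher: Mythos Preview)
The paper does not prove this theorem at all: Theorem~\ref{BKK} is stated in the preliminaries section with the attribution [\cite{Khovanskii}] and is used as a black box (for instance in the proof of Theorem~\ref{NVTM}). There is therefore no ``paper's own proof'' to compare against; you have supplied an independent argument for a result the authors simply quote.

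On its own merits your sketch follows a sound and fairly classical route. Stage~(i) is the standard Ehresmann deformation argument and is fine. Stage~(ii) is Kushnirenko's computation; the face recursion you describe is correct, though you might note that one can bypass the Chern-class integral entirely and argue by additivity over the orbit stratification of $X_\Sigma$ together with the elementary identity $\sum_{\gamma\prec\Delta}(-1)^{\dim\gamma}\Vol_{\ZZ}(\gamma)=(-1)^n\Vol_{\ZZ}(\Delta)$. Stage~(iii) via the Cayley trick also works, and your volume identity for the Cayley polytope is correct; the key point you gloss over is that the fibre of the projection $\widetilde{Y}\to(\CC^*)^n$ has Euler characteristic $(-1)^{p}$ over the open stratum $\{g_1\cdots g_p\neq 0\}$ and Euler characteristic $0$ over every other stratum, so that $\chi(\widetilde{Y})=(-1)^p\chi\bigl((\CC^*)^n\setminus\bigcup_i\{g_i=0\}\bigr)$, after which inclusion--exclusion and the induction hypothesis yield exactly the terms with all $m_i\geq 1$. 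It would strengthen the write-up to make this fibre computation explicit rather than leaving it as ``computed once and for all''.

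A minor caution: for the induction in Stage~(iii) you need that each $Z_I$ with $|I|<p$ is again a non-degenerate complete intersection, which follows from genericity of coefficients but is worth stating, since non-degeneracy of $(g_1,\ldots,g_p)$ alone does not literally imply non-degeneracy of every subfamily in the sense of the definition.
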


\section{A vanishing theorem 
for local systems}\label{sec:3}

Let $B=\{ b(1), b(2), \ldots , 
b(N)\} \subset \ZZ^{n-1}$ be a finite subset 
of the lattice $\ZZ^{n-1}$. 
Let $\Delta \subset \RR^{n-1}$ be the convex 
hull of $B$ in $\RR^{n-1}$. 
Assume that $\dim \Delta =n-1$. 
For $z=(z_1, \ldots, z_N) \in \CC^N$ 
we define a Laurent polynomial $P(x)$ 
on $T_0=( \CC^*)^{n-1}$ by 
$P(x)=\sum_{j=1}^N z_j x^{b(j)}$ 
($x=(x_1, \ldots, x_{n-1}) 
\in T_0=( \CC^*)^{n-1}$). 
Then for $c=(c_1, \ldots, c_n) \in \CC^n$ 
the possibly multi-valued function 
$P(x)^{-c_n} 
x_1^{c_1-1} \cdots x_{n-1}^{c_{n-1}-1}$ on 
$W=T_0 \setminus P^{-1}(0)$ generates the 
local system 
\begin{equation} 
\LL = \CC_{W} 
P(x)^{-c_n} 
x_1^{c_1-1} \cdots x_{n-1}^{c_{n-1}-1}. 
\end{equation} 
Set $a(j)=(b(j), 1) \in \ZZ^n$ 
($1 \leq j \leq N$) and $A=\{ a(1), a(2), \ldots , 
a(N)\} \subset \ZZ^{n}$. 
Then $K= \RR_+ A \subset \RR^n$ is an 
$n$-dimensional  
closed convex polyhedral cone in $\RR^n$. 
For a face $\Gamma \prec K$ of $K$ 
let $\Lin (\Gamma) \simeq 
\CC^{\dim \Gamma} \subset \CC^n$ 
be the $\CC$-linear subspace of 
$\CC^n$ generated by $\Gamma$. 

\begin{definition}\label{NRC} 
(Gelfand-Kapranov-Zelevinsky 
\cite[page 262]{G-K-Z-2}) 
We say that the parameter vector 
$c \in \CC^n$ is nonresonant 
(with respect to $A$) if 
for any face $\Gamma \prec K$ of $K$ 
such that $\dim \Gamma =n-1$ 
we have $c \notin \{ \ZZ^n+ 
\Lin (\Gamma ) \}$. 
\end{definition} 
The following definition is essentially 
weaker than the usual (Kouchnirenko's) 
non-degeneracy (see \cite{khov0}, \cite{Oka} etc.).

\begin{definition}\label{WND} 
We say that the Laurent polynomial 
$P(x)= \sum_{j=1}^N z_j x^{b(j)}$ is 
``weakly" non-degenerate if 
for any face $\gamma$ of 
$\Delta$ such that 
$\dim \gamma < \dim \Delta =n-1$ the hypersurface 
\begin{equation} 
\{ x \in T_0=(\CC^*)^{n-1} \ | \ 
P^{\gamma}(x)= \sum_{j: b(j) \in \gamma} 
z_j x^{b(j)}=0 \} \subset T_0 
\end{equation}
is smooth and reduced. 
\end{definition}
Let $\iota : W=T_0 \setminus P^{-1}(0)
 \hookrightarrow T_0$ be the inclusion 
map and set $\M = R \iota_* \LL 
\in \Dbc (T_0)$. Then the following theorem 
generalizes one of the results in 
Gelfand-Kapranov-Zelevinsky \cite{G-K-Z-2} 
to the case where the hypersurface 
$P^{-1}(0) \subset T_0$ may have 
isolated singular points. 

\begin{theorem}\label{VTM} 
Assume that $\dim \Delta =n-1$, 
the parameter vector 
$c \in \CC^n$ is nonresonant and 
the Laurent polynomial $P(x)$ is 
weakly non-degenerate. Then there exists 
an isomorphism 
\begin{equation} 
H^j_c(T_0; \M ) \simeq H^j(T_0; \M ) 
\simeq H^j(W ; \LL ) 
\end{equation} 
for any $j \in \ZZ$. Moreover we have the 
concentration 
\begin{equation} 
H^j(W ; \LL ) \simeq 
0 \qquad (j \not= n-1). 
\end{equation} 
\end{theorem}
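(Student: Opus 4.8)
The plan is to work on a toric compactification of $T_0$ built from the polytope $\Delta$, and to use the perversity of $\M$ together with Artin-type vanishing on affine toric varieties. First I would normalize the picture: since $\dim \Delta = n-1$, the dual fan of $\Delta$ defines a complete toric variety $X_\Delta$ containing $T_0$ as its open dense torus, and $\LL$ extends to a local system on $W$ which is non-trivial along every codimension-one orbit thanks to the nonresonance hypothesis. The key object is $\M = R\iota_* \LL \in \Dbc(T_0)$; because $\LL[n-1]$ is perverse on the smooth manifold $W$ and $\iota$ is an affine open immersion (the complement $P^{-1}(0)$ is a hypersurface), $\M[n-1] = R\iota_*(\LL[n-1])$ is perverse on $T_0$. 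Here is where the weak non-degeneracy of $P$ enters: it guarantees that the only failure of $\M$ to be a local system on $T_0$ is concentrated at the finitely many isolated singular points of $P^{-1}(0)$, so $\M$ is, away from a finite set, the intermediate extension of a non-resonant rank-one local system, and in particular it has no nonzero constant subquotient along any torus orbit.

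Next I would prove the first isomorphism $H^j_c(T_0;\M)\simeq H^j(T_0;\M)$ by a vanishing-at-the-boundary argument on $X_\Delta$. Let $j_0\colon T_0\hookrightarrow X_\Delta$ be the open inclusion and $D = X_\Delta\setminus T_0$ the union of the toric boundary divisors. One wants $(Rj_{0*}\M)|_D = 0$ and equivalently $(j_{0!}\M)|_D = 0$ in the appropriate sense, i.e. the natural map $H^j_c(T_0;\M)\to H^j(X_\Delta; Rj_{0*}\M)\to H^j(T_0;\M)$ is an isomorphism. The mechanism is the same as in \cite[Lemma 4.2]{E-T-2}: stratify $D$ by torus orbits $O_\sigma$; for each such orbit the restriction of the monodromy of $\M$ around the corresponding divisor is non-trivial (nonresonance), which forces the stalks of $Rj_{0*}\M$ and of $j_{0!}\M$ along $O_\sigma$ to agree — both vanish on the nearby-cycle-type contribution because a non-trivial local system on a punctured disc has no invariants and no coinvariants. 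Running this over all boundary orbits and using the hypercohomology spectral sequence of the stratification gives the claimed coincidence of compactly supported and ordinary cohomology of $\M$; the identification with $H^j(W;\LL)$ is just $R\sect(T_0;R\iota_*\LL) = R\sect(W;\LL)$.

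For the concentration $H^j(W;\LL)\simeq 0$ for $j\neq n-1$, I would combine the two halves. Since $\M[n-1]$ is perverse on $T_0=(\CC^*)^{n-1}$, Artin's vanishing theorem for the affine variety $T_0$ gives $H^j(T_0;\M) = H^{j-(n-1)+ (n-1)}\cdots = 0$ for $j > n-1$ and, applied to the Verdier dual, $H^j_c(T_0;\M) = 0$ for $j < n-1$; more precisely $H^j(T_0;\M[n-1])=0$ for $j>0$ and $H^j_c(T_0;\M[n-1])=0$ for $j<0$. Feeding these into the isomorphism $H^j_c(T_0;\M)\simeq H^j(T_0;\M)$ from the previous paragraph kills all degrees except $j=n-1$. (One must also dispose of $j<n-1$ in $H^j(T_0;\M)$, which is not automatic from affineness alone, but it follows from $H^j_c(T_0;\M)=0$ there via the established isomorphism — this is exactly the point of proving the two cohomologies agree.)

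The main obstacle, and the technical heart of the argument, is the boundary-vanishing step on the singular toric variety $X_\Delta$: $X_\Delta$ need not be smooth (we only assumed $\dim\Delta = n-1$, not that $\Delta$ is a simplex or that the fan is smooth), so one cannot blithely invoke the clean formulas for $R\psi$ and stalks of $Rj_{0*}$ along a normal-crossing boundary. This is precisely why the authors invoke their trick from \cite[Lemma 4.2]{E-T-2}: rather than resolving $X_\Delta$, one works orbit-by-orbit with the local structure of a toric variety near an orbit $O_\sigma$ — which is a product of $O_\sigma$ with an affine toric variety — and checks that the monodromy nonresonance still forces the relevant local cohomology groups to vanish. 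I would also need to be careful that weak non-degeneracy is exactly strong enough to keep $\M$ perverse (isolated non-locally-constant points are harmless for perversity of an IC-type object in the middle perversity) but that it is \emph{not} needed for the boundary argument, which only uses nonresonance of $c$. Verifying that the finitely many interior singularities of $P^{-1}(0)$ do not spoil either Artin vanishing or the compact-support vanishing is a routine local computation once perversity is in hand.
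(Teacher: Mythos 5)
Your overall architecture — compactify $T_0$ by the toric variety of the dual fan of $\Delta$, kill the stalks of $Ri_*\M$ along the boundary using nonresonance, then combine with Artin vanishing for the perverse sheaf $\M$ on the affine torus — is exactly the paper's strategy, and most of your steps are correct in spirit. But there is a genuine conceptual error about the role of weak non-degeneracy, and it leaves a hole in the boundary-vanishing step. You claim that weak non-degeneracy ``guarantees that the only failure of $\M$ to be a local system on $T_0$ is concentrated at the finitely many isolated singular points of $P^{-1}(0)$'' and later that it ``is \emph{not} needed for the boundary argument, which only uses nonresonance.'' Both statements are backwards. Definition~\ref{WND} constrains only the $\gamma$-parts $P^\gamma$ for \emph{proper} faces $\gamma \prec \Delta$; it says nothing about $P^{-1}(0)$ in the interior of $T_0$, which may have non-isolated singularities under the hypotheses of Theorem~\ref{VTM}. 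Conversely, perversity of $\M[n-1]$ costs nothing: $\iota\colon W \hookrightarrow T_0$ is an affine open immersion (the complement is the hypersurface $P^{-1}(0)$), so $R\iota_*$ is $t$-exact and $\M[n-1]$ is perverse automatically, with no non-degeneracy input at all.

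Where weak non-degeneracy is actually indispensable is the very boundary step you describe. Your vanishing mechanism — ``a non-trivial local system on a punctured disc has no invariants and no coinvariants'' — presupposes that, near each boundary orbit $X_\gamma$, the complement $W$ looks like (torus orbit)~$\times$~(punctured disc), possibly further cut by a smooth transverse piece of $\overline{P^{-1}(0)}$. This local normal-crossing structure is exactly what the transversality of $S = \overline{i(P^{-1}(0))}$ with the orbits $X_\gamma$ delivers, and that transversality is what weak non-degeneracy of $P$ yields (this is the first thing the paper's proof establishes). If $S$ were tangent to or singular along a boundary orbit, the local monodromy representation near $X_\gamma$ would be controlled by the singularity of $S$ and not merely by the order $m(\gamma)$ computed from $\nu$ and $c$, and the stalk vanishing $(Ri_*\M)_p \simeq 0$ could fail. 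So as written, your middle paragraph uses the nonresonance but silently omits the transversality hypothesis that makes the punctured-disc reduction legitimate; that is the gap you would need to fill, and it is precisely what weak non-degeneracy is there for.
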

\begin{proof} 
Let $\Sigma_0$ be the dual fan of $\Delta$ in $\RR^{n-1}$ 
and $X$ the (possibly singular) toric variety associated to it. 
Then there exists a natural action of $T_0$ on 
$X$ whose orbits are parametrized by the faces of 
$\Delta$. For a face $\gamma$ of $\Delta$ 
denote by $X_{\gamma} \simeq 
(\CC^*)^{\dim \gamma}$ the $T_0$-orbit 
associated to $\gamma$. Note that 
$X_{\Delta} \simeq T_0$ is the unique 
open dense $T_0$-orbit in $X$ and 
its complement $X \setminus X_{\Delta}$ 
is the union of $X_{\gamma}$ for 
$\gamma \prec \Delta$ such that 
$\dim \gamma <n-1$. Let $i : 
X_{\Delta} \simeq T_0 \hookrightarrow X$ be 
the inclusion map. Then by the weak 
non-degeneracy of $P(x)$, the closure 
$S= \overline{i (P^{-1}(0))} 
\subset X$ 
of the hypersurface $i( P^{-1}(0)) \subset i (T_0)$ 
in $X$ intersects $T_0$-orbits $X_{\gamma}$ 
in $X \setminus X_{\Delta}$ transversally. 
Moreover by the nonresonance 
of $c \in \CC^n$, for any 
$\gamma \prec \Delta$ such that $\dim 
\gamma =n-2$ the monodromy of the local 
system $\LL$ around the codimension-one 
$T_0$-orbit $X_{\gamma} \subset X$ in $X$ 
is non-trivial. 
Indeed, let $\gamma \prec \Delta$ 
be such a facet of $\Delta$. 
We denote by $\Gamma$ the facet of the cone 
$K= \RR_+ A$ generated by 
$\gamma \times \{ 1 \}  \subset K$. 
Let $\nu \in \ZZ^{n-1} 
\setminus \{ 0 \}$ be the primitive 
inner conormal vector of the facet 
$\gamma$ of $\Delta \subset 
\RR^{n-1}$ and set 
\begin{equation} 
m= \min_{v \in \Delta} 
\langle \nu, v \rangle = 
\min_{v \in \gamma} 
\langle \nu, v \rangle \in \ZZ. 
\end{equation} 
Then the primitive 
inner conormal vector 
$\widetilde{\nu} \in \ZZ^{n} 
\setminus \{ 0 \}$ of the facet 
$\Gamma$ of $K \subset \RR^{n}$ 
is explicitly given by the formula 
\begin{equation} 
\widetilde{\nu} = 
\left(  \begin{array}{c}
      \nu \\
      -m 
    \end{array}  \right) \in \ZZ^{n} 
\setminus \{ 0 \}. 
\end{equation}
and the condition 
$c=(c_1, \ldots, c_{n-1}, c_n) \notin 
\{ \ZZ^n+ \Lin (\Gamma ) \}$ is 
equivalent to the one 
\begin{equation} 
m( \gamma ):= 
\biggl\langle \nu , \quad 
\left(  \begin{array}{c}
      c_1-1 \\
      \vdots \\
      c_{n-1}-1 
    \end{array}  \right)  \biggr\rangle 
- m \cdot c_n 
\quad  \notin \ZZ. 
\end{equation} 
We can easily see that the 
order of the (multi-valued) function 
$P(x)^{-c_n} 
x_1^{c_1-1} \cdots x_{n-1}^{c_{n-1}-1}$ 
along the codimension-one 
$T_0$-orbit $X_{\gamma} \subset X$ in $X$ 
is equal to $m( \gamma ) \notin \ZZ$. 
Then by constructing suitable distance functions 
as in the proof of 
\cite[Lemma 4.2]{E-T-2}, we can show that 
for the open embedding $i: T_0 \hookrightarrow 
X$ we have 
\begin{equation}
(Ri_* \M )_p \simeq 0 
\qquad \text{for any} \ p \in 
X \setminus i(T_0) 
\end{equation}
as follows. Let us first assume that the point $p \in 
X \setminus i(T_0)$ lies in a $0$-dimensional 
$T_0$-orbit $X_{\gamma}$. Let $U_{\gamma} 
\subset X$ be an $(n-1)$-dimensional affine toric 
variety containing $\{ p \} =X_{\gamma}$ and regard it 
as a subvariety of $\CC^l_{\zeta}$ for some $l$. 
Let $a=(a_1, \ldots, a_{n-1}) \in \ZZ^{n-1}$ 
be the coordinate of the vertex $\gamma$ of 
$\Delta$ and define a 
(non-trivial) rank one local system 
$\widetilde{\LL}$ on 
$T_0$ by 
\begin{equation} 
\widetilde{\LL} = \CC_{T_0} 
x_1^{c_1-c_na_1-1} \cdots x_{n-1}^{c_{n-1}
-c_na_{n-1}-1}. 
\end{equation}
Then on a neighborhood of 
the point $p$ in $U_{\gamma} \subset \CC^l_{\zeta}$, 
$Ri_* \M$ is isomorphic to $Ri_* \widetilde{\LL}$. 
Next, as in the proof of \cite[Lemma 4.2]{E-T-2} 
we construct a real-valued function 
$\varphi$ on $\CC^l_{\zeta}$ whose level 
sets $\Omega_t= \{ \zeta \in \CC^l \ | \ 
\varphi ( \zeta )<t \}$ ($t \in \RR_{>0}$) 
satisfy the conditions $\cap_{t>0} \Omega_t
= \{ p \} =X_{\gamma}$ and $(\cup_{t>0} \Omega_t) 
\cap T_0 
= T_0$ and use it to show the isomorphism 
\begin{equation} 
0 \simeq R \Gamma (T_0; \widetilde{\LL}) 
\simto (Ri_* \widetilde{\LL})_p  
\end{equation}
by the twisted Morse theory. 
We thus obtain the isomorphism 
$(Ri_* \M )_p \simeq 0 $. 
When the point $p \in 
X \setminus i(T_0)$ lies in a 
$T_0$-orbit $X_{\gamma}$ such that 
$\dim X_{\gamma} = \dim \gamma >0$, by 
taking a normal slice of $X_{\gamma}$ in $X$ 
we can reduce the problem to the case 
where $\dim X_{\gamma} =0$. We thus obtain an 
isomorphism 
$i_! \M \simeq Ri_* \M$ in $\Dbc (X)$. 
Applying the functor $R \Gamma_c(X; \cdot ) = 
R \Gamma (X; \cdot )$ to it we obtain the 
desired isomorphisms 
\begin{equation} 
H^j_c(T_0; \M ) \simeq H^j(T_0; \M ) 
\simeq H^j(W ; \LL ) 
\end{equation} 
for $j \in \ZZ$. Now recall that $T_0$ is 
an affine variety and $\M \in \Dbc (T_0)$ 
is a perverse sheaf on it (up to some 
shift). Then by Artin's vanishing theorem 
for perverse sheaves over affine varieties 
(see \cite[Corollaries 5.2.18 and 5.2.19]{Dimca} 
etc.) we have 
\begin{equation} 
H^j_c(T_0; \M ) \simeq 0 \quad \text{for} \ 
j< \dim T_0 =n-1
\end{equation} 
and 
\begin{equation} 
H^j(T_0; \M ) \simeq 0 \quad \text{for} \ 
j> \dim T_0 =n-1, 
\end{equation} 
from which the last assertion 
immediately follows. 
This completes the proof. 
\end{proof} 

By Theorem \ref{BKK} we obtain the following 
corollary of Theorem \ref{VTM}. 

\begin{corollary}\label{IMPC} 
In the situation of Theorem \ref{VTM}, 
let $p_1, \ldots, p_r \in P^{-1}(0)$ 
be the (isolated) singular points of 
$P^{-1}(0) \subset T_0$ and for 
$1 \leq i \leq r$ let $\mu_i>0$ be the 
Milnor number of $P^{-1}(0)$ at $p_i$. 
Then we have 
\begin{equation} 
\dim H^{n-1}(W ; \LL ) =
 \Vol_{\ZZ}( \Delta ) - 
\sum_{i=1}^r \mu_i. 
\end{equation} 
\end{corollary}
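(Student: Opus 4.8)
The plan is to compute $\dim H^{n-1}(W;\LL)$ as $(-1)^{n-1}\chi(W;\LL)$, using the concentration already proved in Theorem \ref{VTM}, and then to evaluate the Euler characteristic $\chi(W;\LL)$ by relating it to the Euler characteristic of the underlying space $W$. Since $\LL$ is a rank-one local system, for any reasonable space we have $\chi(W;\LL)=\chi(W)$ (Euler characteristic is insensitive to twisting by a rank-one local system, as it is computed from a constructible stratification on which the local system is locally constant). Hence $\dim H^{n-1}(W;\LL)=(-1)^{n-1}\chi(W)$, and it remains to compute $\chi(W)=\chi(T_0\setminus P^{-1}(0))$.

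Next I would split $\chi(W)=\chi(T_0)-\chi(P^{-1}(0))$. Since $T_0=(\CC^*)^{n-1}$ is an algebraic torus, $\chi(T_0)=0$, so $\chi(W)=-\chi(P^{-1}(0))$ and therefore $\dim H^{n-1}(W;\LL)=(-1)^{n-1}\chi(P^{-1}(0))=(-1)^n\chi(P^{-1}(0))\cdot(-1)$; more precisely $\dim H^{n-1}(W;\LL)=(-1)^{n-1}\cdot(-\chi(P^{-1}(0)))=(-1)^n\chi(P^{-1}(0))$. Now $P^{-1}(0)\subset T_0$ is a hypersurface which is smooth except at the isolated singular points $p_1,\ldots,p_r$ with Milnor numbers $\mu_i$. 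Let $H_{\mathrm{nd}}$ denote a non-degenerate hypersurface in $T_0$ with the same Newton polytope $\Delta$ (which exists since $\dim\Delta=n-1$); by the Bernstein-Khovanskii-Kushnirenko theorem (Theorem \ref{BKK} with $p=1$, $n$ replaced by $n-1$) one has $\chi(H_{\mathrm{nd}})=(-1)^{n-2}\Vol_{\ZZ}(\Delta)$, hence $(-1)^n\chi(H_{\mathrm{nd}})=(-1)^{2n-2}\Vol_{\ZZ}(\Delta)=\Vol_{\ZZ}(\Delta)$.

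The remaining point is to compare $\chi(P^{-1}(0))$ with $\chi(H_{\mathrm{nd}})$. The weak non-degeneracy hypothesis guarantees that $P$ and any non-degenerate $P_{\mathrm{nd}}$ with Newton polytope $\Delta$ have the same behaviour on all the torus orbits at infinity of the toric compactification $X$, and differ only by the isolated singularities $p_1,\ldots,p_r$ inside $T_0$. A standard deformation argument (deform $P_{\mathrm{nd}}$ to $P$ within the family of weakly non-degenerate Laurent polynomials with Newton polytope $\Delta$; away from the $p_i$ the deformation is a locally trivial fibration by Ehresmann, and near each $p_i$ the local Milnor fibration contributes, by the description of the vanishing cohomology of an isolated hypersurface singularity, a jump of $(-1)^{n-2}\mu_i$ in the Euler characteristic of a local Milnor fiber, hence a jump of $-\mu_i$ in $\chi$ of the hypersurface itself when passing from smooth to singular) gives
\begin{equation}
\chi(P^{-1}(0))=\chi(H_{\mathrm{nd}})+(-1)^{n-1}\sum_{i=1}^r\mu_i.
\end{equation}
Multiplying by $(-1)^n$ and combining with the two displays above yields $\dim H^{n-1}(W;\LL)=\Vol_{\ZZ}(\Delta)-\sum_{i=1}^r\mu_i$, as claimed.

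The main obstacle is making the last comparison rigorous, i.e. controlling precisely how the isolated singularities contribute to the Euler characteristic; the cleanest route is to use the additivity of $\chi$ over a suitable decomposition and the fact that for an isolated hypersurface singularity of a function in $n-1$ variables the reduced Euler characteristic of the Milnor fiber is $(-1)^{n-2}\mu$, so that replacing a small smooth piece by the singular one changes $\chi$ of the hypersurface by $(-1)^{n-2}\mu_i=(-1)^n\mu_i$; tracking signs carefully then produces the stated formula. All other steps (insensitivity of $\chi$ to rank-one twists, vanishing of $\chi(T_0)$, and the BKK computation of $\Vol_{\ZZ}(\Delta)$) are routine.
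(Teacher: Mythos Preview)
Your approach is correct and is precisely what the paper has in mind: the paper gives no proof beyond the phrase ``By Theorem~\ref{BKK} we obtain the following corollary,'' and the natural way to unpack this is exactly your route --- use the concentration of Theorem~\ref{VTM} to reduce to $(-1)^{n-1}\chi(W;\LL)=(-1)^{n-1}\chi(W)=(-1)^{n}\chi(P^{-1}(0))$, invoke Theorem~\ref{BKK} for a non-degenerate perturbation, and correct by the Milnor numbers.

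Two small remarks. First, in your closing paragraph you write that ``replacing a small smooth piece by the singular one changes $\chi$ of the hypersurface by $(-1)^{n-2}\mu_i$''; the sign here is off by one (the change is $1-(1+(-1)^{n-2}\mu_i)=(-1)^{n-1}\mu_i$), though your displayed formula $\chi(P^{-1}(0))=\chi(H_{\mathrm{nd}})+(-1)^{n-1}\sum\mu_i$ is already correct, so the final answer is unaffected. Second, to make the deformation step airtight you should compactify the family in the toric variety $X$ of Theorem~\ref{VTM}: weak non-degeneracy is an open condition, so $P_s=P+sQ$ remains weakly non-degenerate for small $s$, the closures $\overline{P_s^{-1}(0)}\subset X$ form a proper family over a disk whose only singular fibre is at $s=0$ with singularities exactly at $p_1,\ldots,p_r$, and choosing $Q$ generic with $Q(p_i)\neq 0$ identifies the local Milnor fibre of the family at $p_i$ with that of $P$ at $p_i$. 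A vanishing-cycle (or Thom isotopy) argument then gives the comparison of Euler characteristics directly, and additivity over the boundary strata --- where nothing changes, by weak non-degeneracy --- brings it back down to $T_0$.
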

\begin{proof}
By multiplying a monomial $x^a$ ($a \in \ZZ^{n-1}$) 
to $P(x)$ we may assume that the Newton polytope 
$\Delta$ of $P$ contains the origin $0 \in \RR^{n-1}$. 
Then by Sard's theorem the generic fiber 
$P^{-1}(t) \subset T_0$ ($t \not= 0$) of the map 
$P:T_0 \longrightarrow \CC$ is a non-degenerate 
hypersurface of $T_0$ in the sense of 
Definition \ref{non-deg}. Hence it follows from 
Theorem \ref{BKK} that its Euler characteristic 
$\chi (P^{-1}(t))$ is equal to 
$(-1)^{n-1-1} \Vol_{\ZZ}( \Delta ) = 
(-1)^{n-2} \Vol_{\ZZ}( \Delta )$. It is also 
well-known that we have 
\begin{equation} 
\chi (P^{-1}(0))= 
\chi (P^{-1}(t)) -(-1)^{n-2} 
\sum_{i=1}^r \mu_i. 
\end{equation} 
For the open set $W=T_0 \setminus P^{-1}(0)$ 
of $T_0$, by $\chi (T_0)=0$ we thus obtain the 
equality 
\begin{equation} 
(-1)^{n-1} \chi (W)= 
\Vol_{\ZZ}( \Delta ) - 
\sum_{i=1}^r \mu_i. 
\end{equation} 
Moreover by applying the Mayer-Vietoris argument 
to the rank one local system $\LL$ we have 
$\chi (W)= \sum_{j \in \ZZ} (-1)^j 
\dim H^{j}(W ; \LL )$. Then the assertion 
follows immediately from 
Theorem \ref{VTM}. 
\end{proof}

We can generalize Theorem \ref{VTM} 
to the case where the hypersurface 
$S= \overline{i (P^{-1}(0))} 
\subset X$ has 
(stratified) isolated 
singular points $p$ also in 
$T_0$-orbits $X_{\gamma} 
\subset X \setminus i(T_0)$ as follows. 
For such a point 
$p \in S \cap X_{\gamma}$ of $S$ 
let us show that we have the 
vanishing $(Ri_* \M )_p 
\simeq 0$ in general. 
First consider the case where 
the codimension of $X_{\gamma}$ in $X$ is one. 
The question being local, 
it suffices to consider the case where 
$X= \CC^{n-1}_y \supset 
X_{\gamma}= \{ y_{n-1}=0 \}$, 
$S= \{ f(y)=0 \} \ni p=0$, 
$T_0= \CC^{n-1} \setminus \{ y_{n-1}=0 \}$, 
$i: \CC^{n-1} \setminus \{ y_{n-1}=0 \} \hookrightarrow 
\CC^{n-1}$ and 
\begin{equation} 
\LL = \CC_{\CC^{n-1} \setminus 
\{ f(y) \cdot y_{n-1}=0 \} } 
f(y)^{\alpha} y_{n-1}^{\beta} 
\end{equation}
for $\alpha = -c_n$ and some 
$\beta \in \CC$ 
(by the notation in the 
proof of Theorem \ref{VTM} we have 
$\beta = m ( \gamma )$). Here $f(y)$ is a 
polynomial on 
$\CC^{n-1}$ such that $S=f^{-1}(0)$ 
has a (stratified) isolated singular point 
at $p=0 \in  S \cap X_{\gamma}$. Moreover for 
the inclusion map 
$\iota : \CC^{n-1} 
\setminus \{ f(y) \cdot y_{n-1}=0 \}
\hookrightarrow \CC^{n-1} 
\setminus \{ y_{n-1}=0 \}$ 
we have $\M \simeq R \iota_* \LL$. 
By the nonresonance 
of $c \in \CC^n$ we have $\beta  = m ( \gamma ) 
\notin \ZZ$ and there exists an isomorphism 
\begin{equation}\label{EQ=1} 
i_! ( \CC_{\CC^{n-1} 
\setminus \{ y_{n-1}=0 \} }
 y_{n-1}^{\beta} ) 
\simto 
Ri_* ( \CC_{\CC^{n-1} 
\setminus \{ y_{n-1}=0 \} }
 y_{n-1}^{\beta} ) . 
\end{equation}
Set $\N = i_! ( \CC_{\CC^{n-1} 
\setminus \{ y_{n-1}=0 \} }
 y_{n-1}^{\beta} )$. Then $\N$ is a perverse 
sheaf (up to some shift) 
on $X= \CC^{n-1}$ and satisfies 
the condition 
$\psi_{f}( \N )_p 
\simeq \phi_{f}( \N )_p$ (use \eqref{EQ=1}), where 
\begin{equation} 
\psi_f, \phi_f : \Dbc(X) 
\longrightarrow \Dbc ( \{ f=0 \} ) 
\end{equation}
are the nearby and vanishing 
cycle functors associated to $f$ respectively 
(see \cite{Dimca} etc.). 
By the $t$-exactness 
of the functor $\phi_{f}$ 
the constructible 
sheaf $\phi_{f}( \N )$ 
on $S=f^{-1}(0)$ 
is perverse (up to some 
shift). Moreover by our assumption its 
support is contained in the 
point $\{ p \} = \{ 0 \} 
\subset X = \CC^{n-1}$. 
This implies that 
we have the concentration 
\begin{equation} 
H^j \psi_{f}( \N )_p 
\simeq H^j \phi_{f}( \N )_p 
\simeq 0 \qquad (j \not= n-2). 
\end{equation}
Namely for the Milnor fiber $F_p$ of $f$ at 
$p=0 \in \CC^{n-1}$ we have 
\begin{equation} 
H^j(F_p; \N ) \simeq 
H^j \psi_{f}( \N )_p 
\simeq 0 \qquad (j \not= n-2). 
\end{equation}
Let $B(p; \varepsilon ) \subset \CC^{n-1}$ 
be a small open ball in $\CC^{n-1}$ centered 
at $p=0$ and for $0< \eta \ll \varepsilon$ set 
\begin{equation} 
G= \{ y \in \overline{B(p; \varepsilon )} \ | \ 
0<| f( y )| < \eta \}. 
\end{equation}
Then, in order to show the vanishing 
$(Ri_* \M )_p \simeq 0$ it suffices to 
prove the one $R \Gamma (G; Ri_* \M ) \simeq 0$ 
for the constructible sheaf 
\begin{equation} 
(Ri_* \M )|_G \simeq 
( \N |_G) \otimes_{\CC_G} (f|_G)^{-1} \LL^{\prime}
\end{equation}
on $G$, where $\LL^{\prime}$ is the rank one 
local system on the punctured disk 
$D^*_{\eta}= 
\{ t \in \CC \ | \ 0 <|t|< \eta \} \subset \CC$ 
generated by the function $t^{\alpha}$. 
By the projection formula we have 
\begin{equation} 
R \Gamma (G; Ri_* \M ) \simeq 
R \Gamma ( D^*_{\eta}; R (f|_G)_*( \N |_G) 
\otimes_{\CC_{D^*_{\eta}}} \LL^{\prime}). 
\end{equation}
Note that $H^j R (f|_G)_*( \N |_G) \simeq 0$ 
($j \not= n-2$) and 
$H^{n-2} R (f|_G)_*( \N |_G)$ is a local 
system on $D^*_{\eta}$ whose stalks are 
isomorphic to $H^{n-2}(F_p; \N )
\simeq  H^{n-2} \psi_{f}( \N )_p$. 
Hence, in order to show the vanishing 
$(Ri_* \M )_p \simeq 0$ it suffices to prove 
that the monodromy operator 
$\Phi : H^{n-2} \psi_{f}( \N )_p \simto 
H^{n-2} \psi_{f}( \N )_p$ 
does not have the 
eigenvalue $\exp (- 2 \pi i \alpha )$. 
For this purpose, we shall use the results 
in \cite[Section 5]{M-T-2}. 
Let $\Gamma_+(f) 
\subset \RR_+^{n-1}$ be the convex hull of 
$\cup_{v \in \supp (f)} (v+ \RR^{n-1}_+)$ in $\RR^{n-1}_+$. 
We call it the Newton polyhedron of 
$f$ at the origin $p=0 \in 
\CC^{n-1}$. 

\begin{definition} (see \cite{khov0}, \cite{Oka} etc.) 
We say that $f$ is Newton non-degenerate 
at the origin $p=0 \in 
\CC^{n-1}$ if for any compact face $\gamma \prec 
\Gamma_+(f)$ of $\Gamma_+(f)$ 
the hypersurface 
$\{ y \in (\CC^*)^{n-1} \ | \ f^{\gamma}
(y)=0 \}$ of $(\CC^*)^{n-1}$ is 
smooth and reduced. 
\end{definition}
For each subset 
$I \subset \{ 1,2, \ldots, n-1 \}$ 
we set 
\begin{equation} 
\RR_+^I= \{ v=(v_1, \ldots, v_{n-1}) 
\in \RR_+^{n-1} 
 \ | \ v_i=0 \ 
\text{for any} \ i \notin I \} \simeq \RR_+^{\sharp I}. 
\end{equation}
Let $\gamma_1^I, 
\ldots, \gamma_{n(I)}^I \prec 
\Gamma_+(f) \cap \RR_+^I$ 
be the compact facets of 
$\Gamma_+(f) \cap \RR_+^I$. 
For $1 \leq i \leq n(I)$ 
denote by $d_i^I \in \ZZ_{>0}$ the lattice 
distance of $\gamma_i^I$ from the origin 
$0 \in \RR^{I}_+$ 
and let $u_i^I=(u_{i,1}^I, \ldots, 
u_{i,n-1}^I) \in 
\RR_+^I \cap \ZZ^{n-1}$ be the 
unique (non-zero) primitive 
vector which takes its 
minimum exactly on 
$\gamma_i^I$. For simplicity 
we set $\delta_i^I:=u_{i,n-1}^I$. 
Finally we define 
a finite subset 
$E_p \subset \CC$ of $\CC$ by 
\begin{equation} 
E_p= \bigcup_{I: I \ni n-1} 
\bigcup_{i=1}^{n(I)} 
\{ \lambda \in \CC \ | 
\ \lambda^{d_i^I} = 
\exp (2 \pi \sqrt{-1} 
\beta \cdot \delta_i^I) \}. 
\end{equation}
Then the following result 
is a special case of \cite[Theorem 5.5]{M-T-2}. 

\begin{proposition}
In the above situation, assume moreover that 
$f$ is Newton non-degenerate 
at the origin $p=0 \in 
\CC^{n-1}$. Then the set 
of the eigenvalues of 
the monodromy operator 
$\Phi : H^{n-2} 
\psi_{f}( \N )_p \simto 
H^{n-2} \psi_{f}( \N )_p$ 
is contained in $E_p$. 
\end{proposition}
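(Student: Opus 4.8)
This is the special case of \cite[Theorem 5.5]{M-T-2} recorded above; one proves it, in the spirit of \cite[Section 5]{M-T-2} (and of A'Campo and Varchenko), by reducing $\psi_f(\N)$ near $p$ to an explicit normal–crossing model obtained from a toric modification adapted to $\Gamma_+(f)$. The plan is as follows. Since the assertion is local at $p=0$ and, as already observed, $H^j\psi_f(\N)_p\simeq H^j\phi_f(\N)_p$ is concentrated in degree $n-2$, it suffices to bound the eigenvalues of $\Phi$ on the single vector space $H^{n-2}\psi_f(\N)_p$, equivalently on the cohomology $H^{n-2}(F_0;\N|_{F_0})$ of the Milnor fibre $F_0=\{f=\e\}\cap B_\delta(0)$ equipped with its geometric monodromy. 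Here $\N|_{F_0}$ is the extension by zero to $F_0$ of the rank one local system $\CC\,y_{n-1}^{\beta}$ on $F_0\setminus\{y_{n-1}=0\}$, so the twist is governed only by the vanishing order of $y_{n-1}$, while $f$ enters solely through the Milnor fibration.

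First I would choose a smooth subdivision $\widehat\Sigma$ of the dual fan of the Newton polyhedron $\Gamma_+(f)\subset\RR^{n-1}_+$ whose rays include all coordinate rays $\RR_+e_1,\dots,\RR_+e_{n-1}$, and form the associated proper birational toric morphism $\pi\colon\widehat X\to\CC^{n-1}$; it is an isomorphism over $T'=\CC^{n-1}\setminus\{y_{n-1}=0\}$, and $\pi^*\N$ is again the extension by zero of the pulled back local system. By the Newton non-degeneracy of $f$, in a neighbourhood of $\pi^{-1}(0)$ the zero divisor of $g:=f\circ\pi$ together with all toric divisors has normal crossings: apart from the strict transform $\widehat S$ of $S$ (on which $g$ is reduced and $\pi^*y_{n-1}$ a unit), its components are the toric divisors $D_\rho$ attached to rays $\rho$ of $\widehat\Sigma$, along which $g$ and $\pi^*y_{n-1}$ vanish respectively to the orders
\begin{equation}
a(\rho)=\min_{v\in\Gamma_+(f)}\langle u_\rho,v\rangle,\qquad b(\rho)=(u_\rho)_{n-1},
\end{equation}
$u_\rho$ being the primitive generator of $\rho$.

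Next, using that $\psi_f$ commutes with the proper pushforward $R\pi_*$, I would compare $\psi_f(\N)$ with $R\pi_*\psi_g(\pi^*\N)$ — the two differ only by a complex supported on $\{y_{n-1}=0\}$ coming from $\N\to R\pi_*\pi^*\N$ — and compute the stalk at $p$ by a spectral sequence with $E_2$–term $H^*\bigl(\pi^{-1}(0);\mathcal{H}^*\psi_g(\pi^*\N)\bigr)$, compatibly with the monodromy; since the eigenvalues of the abutment are contained in those of the $E_2$–page, it is enough to bound the monodromy eigenvalues of the stalks of $\psi_g(\pi^*\N)$ along $\pi^{-1}(0)$ (and, for the correction complex, to run the same argument on $\{y_{n-1}=0\}$, of lower dimension). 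At a point of $\pi^{-1}(0)$ lying on components $D_{\rho_1},\dots,D_{\rho_k}$ one has, in suitable local coordinates, $g=\bigl(\prod_l z_l^{a(\rho_l)}\bigr)\cdot(\mathrm{unit})$ and $\pi^*y_{n-1}=\bigl(\prod_l z_l^{b(\rho_l)}\bigr)\cdot(\mathrm{unit})$, and the nearby–cycle stalk of the corresponding twisted extension-by-zero sheaf is read off from the cohomology of $\{\prod_l z_l^{a(\rho_l)}=\e\}$ with the Kummer type local system $\prod_l z_l^{\beta b(\rho_l)}$; all of its monodromy eigenvalues satisfy $\lambda^{a(\rho_l)}=\exp(2\pi\sqrt{-1}\,\beta\cdot b(\rho_l))$ for some $l$ with $a(\rho_l)\neq 0$, the component $\widehat S$ contributing only the eigenvalue $1$.

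Finally I would organize the rays $\rho$ of $\widehat\Sigma$ for which $D_\rho$ meets $\pi^{-1}(0)$ and $a(\rho)\neq 0$, $b(\rho)\neq 0$, according to the minimal coordinate subspace $\RR^I$ containing $u_\rho$: for such $\rho$ one necessarily has $n-1\in I$, the vector $u_\rho$ lies in the cone dual to a compact face of $\Gamma_+(f)\cap\RR^I_+$, and $a(\rho),b(\rho)$ are the same non-negative integral combination of the numbers $d_i^I,\delta_i^I$ attached to the compact facets $\gamma_i^I$ spanning that cone; a direct inspection then shows the condition $\lambda^{a(\rho)}=\exp(2\pi\sqrt{-1}\,\beta\cdot b(\rho))$ to be subsumed by one of the defining conditions $\lambda^{d_i^I}=\exp(2\pi\sqrt{-1}\,\beta\cdot\delta_i^I)$ of $E_p$ — which is exactly where the hypotheses $n-1\in I$ and ``compact facet of $\Gamma_+(f)\cap\RR_+^I$'' intervene. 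Hence every eigenvalue of $\Phi$ on $H^{n-2}\psi_f(\N)_p$ lies in $E_p$. I expect the main obstacle to be the normal–crossing step combined with the extension-by-zero structure of $\N$: one must verify that, through $\pi$ and on the resulting model, the twisted sheaf produces no monodromy eigenvalues beyond those dictated by the pairs $(a(\rho),b(\rho))$ — in particular that the strict transform $\widehat S$ and the correction complex on $\{y_{n-1}=0\}$ contribute nothing outside $E_p$ — whereas the reduction to normal crossings (given Newton non-degeneracy) and the transfer of eigenvalue containment through $R\pi_*$ and the spectral sequence are routine.
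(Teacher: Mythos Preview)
The paper does not prove this proposition at all: it simply records it as ``a special case of \cite[Theorem 5.5]{M-T-2}''. Your opening sentence does exactly the same, so at the level of what the paper actually contains you are in full agreement.

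Your additional sketch of how \cite[Theorem 5.5]{M-T-2} is proved is in the right spirit (toric modification adapted to $\Gamma_+(f)$, reduction to a normal-crossing model, A'Campo/Varchenko-type bookkeeping), but the final step has a genuine gap. You argue that the monodromy eigenvalues on the $E_2$-page of the hypercohomology spectral sequence are already contained in $E_p$, because for each ray $\rho$ the condition $\lambda^{a(\rho)}=\exp(2\pi\sqrt{-1}\,\beta\, b(\rho))$ is ``by direct inspection'' subsumed by one of the facet conditions $\lambda^{d_i^I}=\exp(2\pi\sqrt{-1}\,\beta\,\delta_i^I)$. This is false in general. Take $n-1=2$, $f=y_1^3+y_2^2$. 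The dual fan has the single interior ray $(2,3)$; smoothing forces you to insert, for instance, the ray $u_\rho=(1,2)$ into the cone $\langle(2,3),(0,1)\rangle$. Then $a(\rho)=3$, $b(\rho)=2$, and the stalk of $\psi_g(\pi^{-1}\N)$ at a generic point of $D_\rho\subset\pi^{-1}(0)$ has monodromy eigenvalues with $\lambda^3=e^{4\pi i\beta}$. On the other hand $E_p=\{\lambda:\lambda^6=e^{6\pi i\beta}\}$ (the $I=\{2\}$ condition $\lambda^2=e^{2\pi i\beta}$ being contained in this), and $\lambda^3=e^{4\pi i\beta}$ forces $\lambda^6=e^{8\pi i\beta}\neq e^{6\pi i\beta}$ since $\beta\notin\ZZ$. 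So your $E_2$-page carries eigenvalues outside $E_p$, and the containment argument alone cannot conclude. (Relatedly, the coefficients in $u_\rho=\tfrac12(2,3)+\tfrac12(0,1)$ are not integral, contrary to your ``same non-negative integral combination'' claim; and $\pi$ is an isomorphism only over $(\CC^*)^{n-1}$, not over $\{y_{n-1}\neq 0\}$.)

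What actually happens in the A'Campo/Varchenko mechanism (and in \cite{M-T-2}) is not that the local eigenvalues along every exceptional divisor lie in $E_p$, but that the contributions of the ``extra'' rays introduced by the smooth subdivision \emph{cancel} when one assembles the global object --- concretely, because the open torus strata $D_\rho^\circ$ have Euler characteristic zero, so only the zero-dimensional strata (governed by the compact facets $\gamma_i^I$) survive in the monodromy zeta function. Since you have already noted that $\psi_f(\N)_p$ is concentrated in the single degree $n-2$, the zeta function determines the characteristic polynomial of $\Phi$, and this cancellation is exactly what pins the eigenvalues down to $E_p$. Your sketch would be repaired by replacing the spectral-sequence eigenvalue-containment step with this zeta-function/Euler-characteristic cancellation.
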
 

\begin{corollary}
Assume that $\dim \Delta =n-1$, 
$c \in \CC^n$ is nonresonant, 
$\exp (- 2 \pi \sqrt{-1}  \alpha ) 
= \exp ( 2 \pi \sqrt{-1}  c_n ) 
\notin E_p$ and 
$f$ is Newton non-degenerate 
at the origin $p=0 \in \CC^{n-1}$.  Then we have 
$(Ri_* \M )_p \simeq 0$. 
\end{corollary}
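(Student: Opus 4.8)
The strategy is to deduce the corollary directly from the Proposition together with the reduction already carried out in the discussion preceding it. Throughout we stay in the codimension-one local model of that discussion: $X=\CC^{n-1}_y$, $X_\gamma=\{y_{n-1}=0\}$, $S=\{f=0\}\ni p=0$, $\M\simeq R\iota_*\LL$ with $\LL=\CC_{\CC^{n-1}\setminus\{fy_{n-1}=0\}}f^\alpha y_{n-1}^\beta$, $\N=i_!(\CC_{\CC^{n-1}\setminus\{y_{n-1}=0\}}y_{n-1}^\beta)$, $\alpha=-c_n$ and $\beta=m(\gamma)$.

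First I would collect the consequences of nonresonance. By the computation in the proof of Theorem \ref{VTM} it gives $\beta=m(\gamma)\notin\ZZ$, hence the isomorphism \eqref{EQ=1}; using it, $\N$ is perverse up to shift and $\psi_f(\N)_p\simeq\phi_f(\N)_p$. Since $\phi_f$ is $t$-exact, $\phi_f(\N)$ is perverse up to shift, and the hypothesis that $p$ is a stratified isolated singular point of $S$ forces $\supp\phi_f(\N)\subset\{p\}$; therefore $\psi_f(\N)_p$ and $\phi_f(\N)_p$ are concentrated in degree $n-2$. As recorded in the text, it then suffices to show that the monodromy $\Phi$ on $H^{n-2}\psi_f(\N)_p$ does not have $\exp(-2\pi\sqrt{-1}\,\alpha)=\exp(2\pi\sqrt{-1}\,c_n)$ as an eigenvalue, since this is precisely the eigenvalue whose absence forces the stalk $(Ri_*\M)_p$ of the $f^\alpha$-twisted direct image of $\N$ to vanish.

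It remains to apply the Proposition. Since $f$ is Newton non-degenerate at $p=0$ (and $\dim\Delta=n-1$ and $c$ is nonresonant), the eigenvalues of $\Phi$ all lie in the finite set $E_p$. By hypothesis $\exp(2\pi\sqrt{-1}\,c_n)\notin E_p$, so $\Phi$ has no such eigenvalue, and the previous step gives $(Ri_*\M)_p\simeq 0$, as desired.

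Thus the corollary is essentially formal given the Proposition. The only point that genuinely requires the earlier analysis — and which I would simply cite from the discussion above rather than reprove — is the identification of $(Ri_*\M)_p$ with a shift of the $\exp(-2\pi\sqrt{-1}\,\alpha)$-eigenspace of $\psi_f(\N)_p$, i.e. the compatibility of the open direct image $R\iota_*$ along $\{y_{n-1}=0\}$ with nearby and vanishing cycles along $f$, valid precisely because the exponent $\beta$ transverse to $X_\gamma$ is non-integral. I would also note that the codimension of $X_\gamma$ being one is not a restriction here, since $E_p$ and Newton non-degeneracy at $p$ are phrased in that model; the higher-codimension situation, if needed, reduces to it by taking a normal slice of $X_\gamma$ in $X$ exactly as in the proof of Theorem \ref{VTM}.
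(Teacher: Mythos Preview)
Your proposal is correct and follows exactly the paper's intended argument: the paper presents the Corollary as an immediate consequence of the Proposition together with the reduction already set up in the preceding discussion (in particular, the statement that it suffices to show $\Phi$ has no eigenvalue $\exp(-2\pi\sqrt{-1}\,\alpha)$), and you reproduce that logic faithfully. Your additional remarks on the role of $\beta\notin\ZZ$ and the normal-slice reduction in higher codimension are also consistent with what the paper says immediately before and after the Corollary.
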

In fact, by \cite[Theorem 5.5]{M-T-2} 
we can generalize this corollary to the case 
where the codimension of the $T_0$-orbit 
$X_{\gamma}$ in $X_{\gamma} 
\subset X \setminus i(T_0)$ containing 
the (stratified) isolated 
singular point $p$ of $S$ is larger than one. 
We leave the precise formulation 
to the reader and omit the details here. 
In this way, our Theorem \ref{VTM} can be generalized 
to the case where $S$ has (stratified) isolated 
singular points $p$ also in $T_0$-orbits $X_{\gamma} 
\subset X \setminus i(T_0)$. 
In particular we have the following result. 
For a face $\gamma$ of $\Delta$ let 
$L_{\gamma} \simeq \RR^{\dim \gamma}$ 
be the linear subspace of $\RR^{n-1}$ 
parallel to the affice span of $\gamma$ 
in $\RR^{n-1}$ and consider the $\gamma$-part 
$P^{\gamma}$ of $P$ as a function on 
$T_{\gamma}= \Spec ( \CC [ L_{\gamma} \cap 
\ZZ^{n-1} ] ) \simeq ( \CC^*)^{\dim \gamma}$. 

\begin{theorem}\label{SVTM} 
Assume that $\dim \Delta =n-1$ and 
for any face $\gamma$ of $\Delta$ 
the hypersurface $(P^{\gamma})^{-1}(0) 
\subset T_{\gamma}$ of $T_{\gamma}$ 
has only isolated singular points. 
Then for generic parameter vectors 
$c \in \CC^n$ we have the 
concentration 
\begin{equation} 
H^j(W ; \LL ) \simeq 
0 \qquad (j \not= n-1). 
\end{equation} 
\end{theorem}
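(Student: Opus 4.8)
The plan is to follow the strategy of the proof of Theorem \ref{VTM}, now feeding the vanishing-cycle machinery developed in the discussion above into it in order to deal with the singular points of $S$ that may appear on the toric boundary. As there, let $\Sigma_0$ be the dual fan of $\Delta$ in $\RR^{n-1}$, let $X$ be the associated (complete, possibly singular) toric variety, let $i\colon T_0 \simeq X_{\Delta} \hookrightarrow X$ be the open embedding, and set $\M = R\iota_*\LL$. Exactly as in the proof of Theorem \ref{VTM}, it suffices to show that for a generic choice of $c$ one has the stalk vanishing $(Ri_*\M)_p \simeq 0$ for every $p \in X \setminus i(T_0)$: this gives $i_!\M \simeq Ri_*\M$, hence $H^j_c(T_0;\M) \simeq H^j(T_0;\M) \simeq H^j(W;\LL)$, and Artin's vanishing theorem for the perverse sheaf $\M$ on the affine variety $T_0$ then forces the concentration in degree $n-1$.

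First I would note that the hypothesis --- that $(P^{\gamma})^{-1}(0) \subset T_{\gamma}$ has only isolated singular points for every face $\gamma \prec \Delta$ --- says precisely that $S = \overline{i(P^{-1}(0))} \subset X$ has only stratified isolated singularities with respect to the stratification of $X$ by the $T_0$-orbits $X_{\gamma}$. Indeed, at a point of $X_{\gamma}$ at which $P^{\gamma}$ is smooth the usual gradient computation shows that $S$ is smooth there (in the smooth locus of $X$, or after a normal slice), while each of the finitely many points of $\mathrm{Sing}((P^{\gamma})^{-1}(0))$ is a stratified isolated hypersurface singularity of $S$, and these points do not accumulate since each such singular locus is $0$-dimensional. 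Now fix $p \in X \setminus i(T_0)$, lying on an orbit $X_{\gamma}$. If $p$ is not a singular point of $S \cap X_{\gamma} = (P^{\gamma})^{-1}(0)$ --- in particular if $p \notin S$ --- then near $p$ the divisor $D = S \cup (X \setminus i(T_0))$ is normal crossing with $S$ transverse to the toric boundary, and by the nonresonance of $c$ the monodromy of $\LL$ around each local branch of $D$ through $p$ is non-trivial (around the toric divisors this is the argument in the proof of Theorem \ref{VTM}; around $S$ it is $\exp(2\pi\sqrt{-1}c_n) \neq 1$). The distance-function construction of \cite[Lemma 4.2]{E-T-2} then yields $(Ri_*\M)_p \simeq 0$, reducing to the case $\dim X_{\gamma} = 0$ by a normal slice of $X_{\gamma}$ in $X$ when $X$ is singular at $p$, exactly as in the proof of Theorem \ref{VTM}.

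It remains to treat the finitely many points $p$ that are singular points of some $(P^{\gamma})^{-1}(0)$ with $\gamma \neq \Delta$. For such a $p$ we are in the local situation analysed just before the statement of Theorem \ref{SVTM}: writing $f$ for a local equation of $S$ near $p$ (in a smooth toric chart, after a normal slice if necessary) and $\beta = m(\gamma)$ --- or, for a boundary orbit of higher codimension, the tuple of orders of $\LL$ along the toric divisors through $p$ --- the nonresonance of $c$ gives $\beta \notin \ZZ$ and hence the isomorphism \eqref{EQ=1} for the perverse sheaf $\N = i_!(\CC\, y_{n-1}^{\beta})$ (or its appropriate analogue), so that $\psi_f(\N)_p \simeq \phi_f(\N)_p$. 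Since $\phi_f$ is $t$-exact and $S$ has a stratified isolated singularity at $p$, the complex $\phi_f(\N)_p$ is concentrated in degree $n-2$, and $(Ri_*\M)_p \simeq 0$ holds as soon as the monodromy operator $\Phi$ on $H^{n-2}\psi_f(\N)_p$ does not admit the eigenvalue $\exp(-2\pi\sqrt{-1}c_n)$; when $f$ is Newton non-degenerate at $p$ the set of these eigenvalues is the explicit finite set $E_p$ of \cite[Theorem 5.5]{M-T-2}, and in the higher-codimension case one uses the corresponding generalisation of that theorem. I would then choose $c$ generically: the nonresonance condition excludes $c$ from a countable union of proper affine subspaces of $\CC^n$; for each of the finitely many points $p$ above, the operator $\Phi = \Phi_{\beta}$ depends on $c$ only through $\beta$, has finitely many eigenvalues for each value of $\beta$, and these vary algebraically with $\exp(2\pi\sqrt{-1}\beta)$, while the linear functionals $c \mapsto c_n$ and $c \mapsto \langle \nu,(c_1-1,\ldots,c_{n-1}-1)\rangle - m\,c_n = \beta(c)$ are linearly independent (as $\nu \neq 0$); hence $\exp(-2\pi\sqrt{-1}c_n)$ fails to be an eigenvalue of $\Phi_{\beta}$ off a further countable union of proper analytic subsets of $\CC^n$. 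For $c$ outside all these exceptional sets we obtain $(Ri_*\M)_p \simeq 0$ for every $p \in X \setminus i(T_0)$, and the proof is complete.

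The hard part is the stalk vanishing at the finitely many singular points $p$ of $S$ lying on the boundary orbits, and there two things need care. First, one must check that the hypothesis genuinely forces $S$ to carry a \emph{stratified} isolated singularity at $p$ --- so that $\phi_f(\N)_p$ is concentrated in the single degree $n-2$ --- which relies on the $0$-dimensionality (hence finiteness) of the singular loci of all the $(P^{\gamma})^{-1}(0)$ and on the product structure of $S$ along the smooth part of each $(P^{\gamma})^{-1}(0)$. Second, one must control the eigenvalues of the local monodromy $\Phi_{\beta}$ well enough to see that $\exp(2\pi\sqrt{-1}c_n)$ avoids them for generic $c$: this is transparent when $f$ is Newton non-degenerate at $p$, since \cite[Theorem 5.5]{M-T-2} then gives the finite set $E_p$ explicitly and the exclusion is visibly generic in $c$, but in general $f$ need not be Newton non-degenerate, and one must argue abstractly from the finiteness of the spectrum of $\Phi_{\beta}$ together with the independence of $c_n$ and $\beta$ as linear functions of $c$.
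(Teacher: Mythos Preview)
Your argument is exactly the one the paper sketches: take the toric compactification $X$ by the dual fan of $\Delta$, reduce to the stalk vanishing $(Ri_*\M)_p\simeq 0$ at every boundary point, handle stratified-smooth points of $S$ by the distance-function method of \cite[Lemma~4.2]{E-T-2} and the finitely many stratified singular points by the vanishing-cycle reduction to the monodromy-eigenvalue condition, and finish with Artin's theorem. You go beyond the paper in one place. The paper's discussion preceding the theorem (in particular the Corollary just before it) invokes \cite[Theorem~5.5]{M-T-2}, which requires Newton non-degeneracy of the local equation $f$ at the singular boundary point --- a hypothesis not implied by the statement of Theorem~\ref{SVTM} --- and the paper simply leaves this point implicit. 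You correctly flag the issue and supply an abstract argument in its place.

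That abstract argument is sound in outline: $c_n$ and $\beta(c)=m(\gamma)$ are linearly independent affine functionals on $\CC^n$ (since $\nu\neq 0$), and for each fixed $\lambda=\exp(2\pi\sqrt{-1}\beta)$ the spectrum of $\Phi_\lambda$ on $H^{n-2}\psi_f(\N_\lambda)_p$ is finite. The step that still needs justification is your claim that the eigenvalues ``vary algebraically'' with $\lambda$, i.e.\ that the bad locus $\{(\mu,\lambda)\in(\CC^*)^2:\mu\in\mathrm{Spec}(\Phi_\lambda)\}$ is a proper constructible subset, so that its preimage under the surjection $c\mapsto(\exp(2\pi\sqrt{-1}c_n),\exp(2\pi\sqrt{-1}\beta(c)))$ is nowhere dense in $\CC^n$. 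This is true --- the family $\lambda\mapsto\N_\lambda$ is algebraic, and nearby-cycle cohomology together with its monodromy is constructible in such algebraic families --- but it would strengthen the write-up to say so explicitly and give a reference, since neither the paper nor your proposal does. The higher-codimension case (a tuple of $\beta$'s rather than one) is handled the same way, as you indicate.
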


\medskip \par 

From now, let us generalize Theorem \ref{VTM} 
to the following more general situation. 
For $0<k<n$ let $B_i=\{ b_i(1), b_i(2), \ldots , 
b_i(N_i)\} \subset \ZZ^{n-k}$ 
($1 \leq i \leq k$) be $k$ finite subsets 
of the lattice $\ZZ^{n-k}$ and set 
$N=N_1 + N_2 + \cdots +N_k$. 
For $1 \leq i \leq k$ and 
$(z_{i1}, \ldots, z_{iN_i}) \in \CC^{N_i}$ 
we define a Laurent polynomial $P_i(x)$ 
on $T_0=( \CC^*)^{n-k}$ by 
$P_i(x)=\sum_{j=1}^{N_i} z_{ij} x^{b_i(j)}$ 
($x=(x_1, \ldots, x_{n-k}) \in T_0=( \CC^*)^{n-k}$). 
Let us set $W=T_0 \setminus \cup_{i=1}^k P_i^{-1}(0)$. 
Then for 
$c=(c_1, \ldots, c_{n-k}, \tl{c_1}, 
\ldots, \tl{c_k}) \in \CC^n$ 
the possibly multi-valued function 
\begin{equation} 
P_1(x)^{-\tl{c_1}} \cdots P_k(x)^{-\tl{c_k}} 
x_1^{c_1-1} \cdots x_{n-k}^{c_{n-k}-1}
\end{equation} 
on $W$ generates the local system 
\begin{equation} 
\LL = \CC_{W} 
P_1(x)^{-\tl{c_1}} \cdots P_k(x)^{-\tl{c_k}} 
x_1^{c_1-1} \cdots x_{n-k}^{c_{n-k}-1}. 
\end{equation} 
Let $e_i=(0,0, \ldots, 0,1,0, \ldots, 0) \in \ZZ^k$ 
($1 \leq i \leq k$) be the standard basis of $\ZZ^k$ 
and set $a_i(j)=(b_i(j), e_i) \in \ZZ^{n-k} 
\times \ZZ^k=\ZZ^n$ 
($1 \leq i \leq k$, $1 \leq j \leq N_i$) 
and 
\begin{equation} 
A=\{ a_1(1), \ldots , a_1(N_1), 
\ldots \ldots , 
a_k(1), \ldots , a_k(N_k)
\} \subset \ZZ^{n}. 
\end{equation} 
For $1 \leq i \leq k$ let $\Delta_i \subset 
\RR^{n-k}$ be the convex 
hull of $B_i$ in $\RR^{n-k}$. 
Denote by $\Delta \subset \RR^{n-k}$ 
their Minkowski sum $\Delta_1 + \cdots +
\Delta_k$. Assume that $\dim \Delta =n-k$. 
Then by using the $n$-dimensional 
closed convex polyhedral cone 
$K= \RR_+ A \subset \RR^n$ 
generated by $A$ in $\RR^n$ 
we can define the nonresonance of 
the parameter $c \in \CC^n$ as in 
Definition \ref{NRC}. 
For a face $\gamma \prec \Delta$ 
of $\Delta$ let $\gamma_i \prec \Delta_i$ 
be the faces of $\Delta_i$ ($1 \leq i \leq k$) 
canonically associated to $\gamma$ such that 
$\gamma = \gamma_1 + \cdots +
\gamma_k$. 

\begin{definition}\label{MND} 
We say that the $k$-tuple of the 
Laurent polynomials 
$(P_1, \ldots, P_k)$ is 
``weakly" (resp. ``strongly") non-degenerate if 
for any face $\gamma$ of 
$\Delta$ such that 
$\dim \gamma < \dim \Delta = n-k$ 
(resp. $\dim \gamma \leq \dim \Delta = n-k$) 
and non-empty subset 
$J \subset \{ 1,2, \ldots, k \}$ 
the subvariety 
\begin{equation} 
\{ x \in T_0=(\CC^*)^{n-k} \ | \ 
P_i^{\gamma_i}(x)=0 \ \ 
(i \in J) \} \subset T_0 
\end{equation}
is a non-degenerate complete intersection. 
\end{definition}

\begin{remark}
Denote the convex hull of 
$\cup_{i=1}^k (\Delta_i \times \{ e_i \}) \subset 
\RR^{n-k} \times \RR^{k} = 
\RR^{n}$ in $\RR^{n}$ by $\Delta_1 * \cdots * \Delta_k$. 
Then $\Delta_1 * \cdots * \Delta_k$ is naturally identified 
with the Newton polytope of the Laurent polynomial 
$R(x,t)=\sum_{i=1}^k P_i(x)t_i $ on 
$\widetilde{T_0}:= T_0 \times (\CC^*)_t^k \simeq 
(\CC^*)_{x,t}^{n}$. In \cite{G-K-Z-2} the authors considered 
the condition that for any face $\gamma$ of 
$\Delta_1 * \cdots * \Delta_k$ the hypersurface 
$\{ (x,t) \in \widetilde{T_0} \ | \ 
R^{\gamma}(x,t)=0 \} \subset \widetilde{T_0}$ of 
$\widetilde{T_0}$ is smooth and reduced.  It is easy to 
see that our strong non-degeneracy of the 
$k$-tuple $(P_1, \ldots, P_k)$ 
in Definition \ref{MND} 
is equivalent to their condition. 
\end{remark} 

Let $\iota : 
W=T_0 \setminus \cup_{i=1}^k P_i^{-1}(0) 
 \longrightarrow T_0$ be the inclusion 
map and set $\M = R \iota_* \LL 
\in \Dbc (T_0)$. 

\begin{theorem}\label{MVTM} 
Assume that $\dim \Delta = n-k$, 
the parameter vector 
$c \in \CC^n$ is nonresonant and 
$(P_1, \ldots, P_k)$ is 
weakly non-degenerate. Then there exists 
an isomorphism 
\begin{equation} 
H^j_c(T_0; \M ) \simeq H^j(T_0; \M ) 
\simeq H^j(W ; \LL ) 
\end{equation} 
for any $j \in \ZZ$. Moreover we have the 
concentration 
\begin{equation} 
H^j(W ; \LL ) \simeq 
0 \qquad (j \not= n-k). 
\end{equation} 
\end{theorem}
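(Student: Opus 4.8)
The plan is to run the argument of the proof of Theorem~\ref{VTM} with the single hypersurface $P^{-1}(0)$ replaced by the arrangement of toric hypersurfaces $\bigcup_{i=1}^{k}P_i^{-1}(0)$. Let $\Sigma_0$ be the dual fan of the full-dimensional lattice polytope $\Delta=\Delta_1+\cdots+\Delta_k\subset\RR^{n-k}$ and let $X$ be the associated (possibly singular) complete toric variety, endowed with its $T_0$-action; its orbits $X_\gamma\simeq(\CC^*)^{\dim\gamma}$ are parametrized by the faces $\gamma\prec\Delta$, with $X_\Delta\simeq T_0$ the open dense orbit and complement $\bigcup_{\dim\gamma<n-k}X_\gamma$. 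Write $i\colon T_0\hookrightarrow X$ for the inclusion and $S_i=\overline{i(P_i^{-1}(0))}\subset X$ for $1\le i\le k$. Since $\iota\colon W\hookrightarrow T_0$ is the inclusion of the complement of the single hypersurface $\{P_1\cdots P_k=0\}$ in the affine variety $T_0$, it is an affine open immersion; hence $R\iota_*$ is $t$-exact for the perverse $t$-structure and $\M[n-k]=R\iota_*(\LL[n-k])$ is a perverse sheaf on $T_0$ --- no non-degeneracy assumption is needed for this.

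Next I would describe the behaviour along the boundary divisors and connect it to nonresonance. Fix a facet $\gamma\prec\Delta$ with $\dim\gamma=n-k-1$, let $\nu\in\ZZ^{n-k}$ be its primitive inner conormal, set $m_i=\min_{v\in\Delta_i}\langle\nu,v\rangle\in\ZZ$, and let $\gamma=\gamma_1+\cdots+\gamma_k$ with $\gamma_i=\{v\in\Delta_i\mid\langle\nu,v\rangle=m_i\}$. Then $\widetilde\nu=(\nu,-m_1,\dots,-m_k)\in\ZZ^n$ is the primitive inner conormal of the face $\Gamma\prec K=\RR_+A$ cut out by $\langle\widetilde\nu,\cdot\rangle=0$, and because $\dim\Gamma=\dim\gamma+k=n-1$ this $\Gamma$ is a facet of $K$. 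A computation identical to the one in the proof of Theorem~\ref{VTM} shows that the order of $P_1(x)^{-\tl{c_1}}\cdots P_k(x)^{-\tl{c_k}}x_1^{c_1-1}\cdots x_{n-k}^{c_{n-k}-1}$ along the codimension-one orbit $X_\gamma\subset X$ equals
\[
m(\gamma)=\Bigl\langle\nu,\ (c_1-1,\dots,c_{n-k}-1)\Bigr\rangle-\sum_{i=1}^{k}m_i\tl{c_i}=\langle\widetilde\nu,c\rangle-\bigl\langle\nu,(1,\dots,1)\bigr\rangle,
\]
so that $m(\gamma)\notin\ZZ$ is equivalent to $c\notin\ZZ^n+\Lin(\Gamma)$. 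Thus nonresonance of $c$ forces the local monodromy of $\LL$ around every codimension-one orbit $X_\gamma\subset X$ to be nontrivial.

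The core of the proof is then to show $(Ri_*\M)_p\simeq0$ for all $p\in X\setminus i(T_0)$; granting this, one gets $i_!\M\simeq Ri_*\M$ in $\Dbc(X)$, and applying $R\Gamma_c(X;-)=R\Gamma(X;-)$ (valid since $X$ is complete) yields $H^j_c(T_0;\M)\simeq H^j(T_0;\M)\simeq H^j(W;\LL)$ for all $j$; finally, since $T_0$ is affine of dimension $n-k$ and $\M[n-k]$ is perverse, Artin's vanishing theorem for perverse sheaves on affine varieties (\cite[Corollaries 5.2.18 and 5.2.19]{Dimca}) gives $H^j_c(T_0;\M)\simeq0$ for $j<n-k$ and $H^j(T_0;\M)\simeq0$ for $j>n-k$, whence the desired concentration $H^j(W;\LL)\simeq0$ for $j\neq n-k$. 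To prove the stalk vanishing I would invoke weak non-degeneracy of $(P_1,\dots,P_k)$: for every face $\gamma\prec\Delta$ with $\dim\gamma<n-k$ and every nonempty $J\subset\{1,\dots,k\}$, $\bigcap_{i\in J}S_i$ meets the orbit $X_\gamma$ along a non-degenerate complete intersection, so near every boundary point the arrangement $\bigcup_iS_i$ together with the toric boundary $X\setminus i(T_0)$ is as transverse as possible. Then, exactly as in the proof of Theorem~\ref{VTM}, for $p$ in a $0$-dimensional orbit $X_\gamma$ I would take an $(n-k)$-dimensional affine toric chart $U_\gamma\ni p$, realize it as a subvariety of some $\CC^l$, and build on $\CC^l$ a distance function adapted to $\bigcup_iS_i\cup(X\setminus i(T_0))$ following \cite[Lemma 4.2]{E-T-2}; nontriviality of the monodromies around all codimension-one orbits through $p$ then forces $(Ri_*\M)_p\simeq0$. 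For $p$ in an orbit $X_\gamma$ with $\dim\gamma>0$, one takes a normal slice of $X_\gamma$ in $X$ --- itself a smaller toric variety in which $X_\gamma$ becomes a $0$-dimensional orbit and onto which weak non-degeneracy restricts --- and reduces to the previous case.

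I expect the main obstacle to be exactly this stalk vanishing on the boundary: one has to verify that the distance-function / twisted Morse argument of \cite[Lemma 4.2]{E-T-2}, originally set up for a single smooth hypersurface meeting the toric boundary transversally, really does carry over to a union of non-degenerate complete intersections --- i.e. that the contribution of $\M$ supported near a boundary point has vanishing stalk cohomology as soon as all the relevant boundary monodromies are nontrivial --- and that weak non-degeneracy in the sense of Definition~\ref{MND} is genuinely inherited by the normal slices used in the dimension reduction. Apart from these two points, the proof is a line-by-line transcription of the proof of Theorem~\ref{VTM}.
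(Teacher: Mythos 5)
Your proposal follows the paper's proof of Theorem \ref{MVTM} essentially line by line: same toric compactification $X$ from the dual fan of $\Delta$, same computation of $m(\gamma)$ and identification of nonresonance with nontriviality of the monodromy around each codimension-one orbit, same stalk-vanishing argument via distance functions and normal slices from \cite[Lemma 4.2]{E-T-2}, and same conclusion via Artin's vanishing for the perverse sheaf $\M$. The one additional observation the paper makes explicit, which resolves the ``main obstacle'' you flag, is that weak non-degeneracy forces the closure $S = \overline{i(\cup_{i} P_i^{-1}(0))}$ to be a normal crossing divisor in a neighborhood of each boundary orbit $X_\gamma$, with every irreducible component meeting $X_\gamma$ transversally, so near the toric boundary the local picture reduces to the normal-crossing situation already handled in Theorem \ref{VTM} and no genuinely new phenomenon arises.
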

\begin{proof} 
The proof is similar to that of Theorem \ref{VTM}. 
Let $\Sigma_0$ be the dual fan of $\Delta$ in $\RR^{n-k}$ 
and $X$ the (possibly singular) toric 
variety associated to it. 
For a face $\gamma$ of $\Delta$ 
we denote by $X_{\gamma} \simeq 
(\CC^*)^{\dim \gamma}$ the $T_0$-orbit 
associated to $\gamma$. Let $i : 
X_{\Delta} \simeq T_0 \hookrightarrow X$ be 
the inclusion map. Then by the weak 
non-degeneracy of the $k$-tuple 
$(P_1, \ldots, P_k)$, for 
any $T_0$-orbits $X_{\gamma}$ 
in $X \setminus X_{\Delta}$ and the closure 
$S= \overline{i (\cup_{i=1}^k P_i^{-1}(0))} 
\subset X$ 
of the hypersurface $i(\cup_{i=1}^k 
P_i^{-1}(0)) \subset i (T_0)$ 
in $X$ their intersection 
$S \cap X_{\gamma} 
\subset X_{\gamma}$ is a normal 
crossing divisor in $X_{\gamma}$. In fact 
$S$ itself is normal crossing on a neighborhood 
of such $X_{\gamma}$ and any irreducible 
component of it intersects $X_{\gamma}$ 
transversally. 
Moreover by the nonresonance 
of $c \in \CC^n$, for any 
$\gamma \prec \Delta$ such that $\dim 
\gamma =n-k-1$ the monodromy of the local 
system $\LL$ around the codimension-one 
$T_0$-orbit $X_{\gamma} \subset X$ in $X$ 
is non-trivial. Indeed, let $\gamma \prec \Delta$ 
be such a facet of $\Delta$ and 
$\gamma_i \prec \Delta_i$ 
the faces of $\Delta_i$ ($1 \leq i \leq k$) 
associated to $\gamma$ such that 
$\gamma = \gamma_1 + \cdots +
\gamma_k$. We denote the convex hull of 
$\cup_{i=1}^k (\Delta_i \times \{ e_i \})$ 
(resp. $\cup_{i=1}^k (\gamma_i \times \{ e_i \})$) 
$\subset \RR^{n-k} \times \RR^{k} = \RR^{n}$ 
in $\RR^{n}$ by $\Delta_1 * \cdots * \Delta_k$ 
(resp. $\gamma_1 * \cdots * \gamma_k$). 
Then $\Delta_1 * \cdots * \Delta_k$ is the 
join of $\Delta_1, \ldots, \Delta_k$ and 
$\gamma_1 * \cdots * \gamma_k$ is its 
facet. We denote by $\Gamma$ the facet of the cone 
$K= \RR_+ A$ generated by 
$\gamma_1 * \cdots * \gamma_k \subset K$. 
Let $\nu \in \ZZ^{n-k} 
\setminus \{ 0 \}$ be the primitive 
inner conormal vector of the facet 
$\gamma$ of $\Delta \subset 
\RR^{n-k}$ and for $1 \leq i \leq k$ set 
\begin{equation} 
m_i= \min_{v \in \Delta_i} 
\langle \nu, v \rangle = 
\min_{v \in \gamma_i} 
\langle \nu, v \rangle \in \ZZ. 
\end{equation} 
Then the primitive 
inner conormal vector 
$\widetilde{\nu} \in \ZZ^{n} 
\setminus \{ 0 \}$ of the facet 
$\Gamma$ of $K \subset \RR^{n}$ 
is explicitly given by the formula 
\begin{equation} 
\widetilde{\nu} = 
\left(  \begin{array}{c}
      \nu \\
      -m_1 \\
      \vdots \\
      -m_k 
    \end{array}  \right) \in \ZZ^{n} 
\setminus \{ 0 \}. 
\end{equation}
and the condition 
$c=(c_1, \ldots, c_{n-k}, \tl{c_1}, 
\ldots, \tl{c_k}) \notin 
\{ \ZZ^n+ \Lin (\Gamma ) \}$ is 
equivalent to the one 
\begin{equation} 
m( \gamma ):= 
\biggl\langle \nu , \quad 
\left(  \begin{array}{c}
      c_1-1 \\
      \vdots \\
      c_{n-k}-1 
    \end{array}  \right)  \biggr\rangle 
- \sum_{i=1}^k m_i \cdot \tl{c_i} 
\quad  \notin \ZZ. 
\end{equation} 
Moreover we can easily see that the 
order of the (multi-valued) function 
\begin{equation} 
P_1(x)^{-\tl{c_1}} \cdots P_k(x)^{-\tl{c_k}} 
x_1^{c_1-1} \cdots x_{n-k}^{c_{n-k}-1}
\end{equation} 
along the codimension-one 
$T_0$-orbit $X_{\gamma} \subset X$ in $X$ 
is equal to $m( \gamma ) \notin \ZZ$. 
Finally, by constructing suitable distance functions 
as in the proof of 
\cite[Lemma 4.2]{E-T-2}, we can show that 
\begin{equation}
(Ri_* \M )_p \simeq 0 \qquad \text{for any} \ p \in 
X \setminus T_0. 
\end{equation}
Namely there exists an isomophism 
$i_! \M \simeq Ri_* \M$ in $\Dbc (X)$. 
Applying the functor $R \Gamma_c(X; \cdot ) = 
R \Gamma (X; \cdot )$ to it we obtain the 
desired isomorphisms 
\begin{equation} 
H^j_c(T_0; \M ) \simeq H^j(T_0; \M ) 
\simeq H^j(W ; \LL ) 
\end{equation} 
for $j \in \ZZ$. Then 
the remaining assertion can be proved 
as in the proof of Theorem \ref{VTM}. 
This completes the proof. 
\end{proof} 

In the situation of Theorem \ref{MVTM}, for any 
$1 \leq i \leq k$ the 
hypersurface $P_i^{-1}(0) \subset T_0$ 
has only isolated singular points. 
Assume moreover that the hypersurface 
$\cup_{i=1}^k P_i^{-1}(0) \subset T_0$ 
is normal crossing outside them. 
Then as in Corollary \ref{IMPC}, 
by Theorem \ref{BKK} we can also express 
the dimension of $H^{n-k}(W ; \LL )$ 
in terms of some mixed volumes of 
the polytopes $\Delta_1, \ldots, \Delta_k$ 
and the Milnor numbers of the 
isolated singular points. Since the 
statement of this result is involved, 
we leave its precise formulation to the reader. 

\medskip \par 

As in the case where $k=1$ we have the following 
result. For a face $\gamma$ of $\Delta$ let 
$L_{\gamma} \simeq \RR^{\dim \gamma}$ 
be the linear subspace of $\RR^{n-k}$ 
parallel to the affice span of $\gamma$ 
in $\RR^{n-k}$ and for $1 \leq i \leq k$ 
consider the $\gamma_i$-part 
$P_i^{\gamma_i}$ of $P_i$ as a function on 
$T_{\gamma}= \Spec ( \CC [ L_{\gamma} \cap 
\ZZ^{n-k} ] ) \simeq ( \CC^*)^{\dim \gamma}$. 

\begin{theorem}\label{SMVTM} 
Assume that $\dim \Delta = n-k$ and 
for any $1 \leq i \leq k$ the hypersurface 
$P_i^{-1}(0) \subset T_0$ of $T_0$ has only 
isolated singular points. Assume moreover that 
for any face $\gamma$ of 
$\Delta$ such that $\dim \gamma < 
\dim \Delta = n-k$ 
and non-empty subset 
$J \subset \{ 1,2, \ldots, k \}$ 
the $k$-tuple of the Laurent polynomials 
$(P_1, \ldots, P_k)$ satisfies the following 
condition: 
\medskip \par 
If $J=\{ i \}$ for some $1 \leq i \leq k$ 
and $\dim \gamma_i= \dim \gamma =
\dim \Delta -1= n-k-1$ 
the \par 
hypersurface 
$(P_i^{\gamma_i})^{-1}(0) 
\subset T_{\gamma}$ of $T_{\gamma}$ 
has only isolated singular points. 
Otherwise, 
\par 
the subvariety 
\begin{equation} 
\{ x \in T_0=(\CC^*)^{n-k} \ | \ 
P_i^{\gamma_i}(x)=0 \ \ 
(i \in J) \} \subset T_0 
\end{equation}
\par 
of $T_0$ is a non-degenerate 
complete intersection. 
\medskip \par 
\noindent Then for generic parameter vectors 
$c \in \CC^n$ we have the 
concentration 
\begin{equation} 
H^j(W ; \LL ) \simeq 
0 \qquad (j \not= n-k). 
\end{equation} 
\end{theorem}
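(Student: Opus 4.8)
The argument combines the pattern of the proof of Theorem \ref{MVTM} with the local vanishing-cycle analysis used above for Theorems \ref{VTM} and \ref{SVTM}. Let $\Sigma_0$ be the dual fan of $\Delta$ in $\RR^{n-k}$, let $X$ be the associated (possibly singular) toric variety, let $i : X_\Delta \simeq T_0 \hookrightarrow X$ be the open embedding, and for a face $\gamma \prec \Delta$ write $X_\gamma \simeq (\CC^*)^{\dim\gamma}$ for the corresponding $T_0$-orbit and $S = \overline{i(\bigcup_{j=1}^{k} P_j^{-1}(0))} \subset X$. Since $\iota : W \hookrightarrow T_0$ is an affine open immersion and $\LL$ is a rank-one local system on the smooth variety $W$ of dimension $n-k$, the complex $\M[n-k] = R\iota_*(\LL[n-k])$ is already a perverse sheaf on $T_0$, whatever the singularities of $\bigcup_j P_j^{-1}(0)$ inside $T_0$ may be; so those interior singularities play no role. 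Hence, exactly as in the proof of Theorem \ref{MVTM}, it suffices to establish the stalk vanishing $(Ri_* \M )_p \simeq 0$ for every $p \in X \setminus i(T_0)$: granting this one gets $i_! \M \simeq Ri_* \M$ in $\Dbc(X)$, so $H^j_c(T_0;\M) \simeq H^j(T_0;\M) \simeq H^j(W;\LL)$ for all $j$, and Artin's vanishing theorem applied to the perverse sheaf $\M[n-k]$ on the affine variety $T_0$ then forces $H^j(W;\LL) \simeq 0$ for $j \neq n-k$.

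I would prove this stalk vanishing by distinguishing, according to the hypotheses, two kinds of boundary point. \emph{First case:} $p$ lies in an orbit $X_\gamma$ near which $S$ is a normal crossing divisor meeting $X_\gamma$ transversally; by the non-degeneracy assumptions this covers every face $\gamma$ with $\dim\gamma < n-k$ except the facets $\gamma$ for which $\gamma_i = \gamma$ for a single index $i$ while $\gamma_j$ is a point for all $j\neq i$. For such $p$, taking a normal slice of $X_\gamma$ in $X$ reduces the problem to the case $\dim X_\gamma = 0$, exactly as in the proof of Theorem \ref{VTM}; there one constructs suitable distance functions as in \cite[Lemma 4.2]{E-T-2} and deduces $(Ri_*\M)_p\simeq 0$, using the nonresonance of $c$, which guarantees that $\LL$ has non-trivial monodromy around every codimension-one $T_0$-orbit. \emph{Second (new) case:} $p$ is a (stratified) isolated singular point of $(P_i^{\gamma_i})^{-1}(0)$ lying in a codimension-one orbit $X_\gamma$ with $\dim\gamma_i=\dim\gamma=n-k-1$ and $\gamma_j$ a point for every $j\neq i$. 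Because $\gamma_j$ is then a point for $j\neq i$, the polynomial $P_j$ does not vanish along $X_\gamma$ for $j\neq i$, so near $X_\gamma$ one has $S=\overline{i(P_i^{-1}(0))}$ and the local picture is exactly the one considered before Theorem \ref{SVTM}: $X=\CC^{n-k}_y$, $X_\gamma=\{y_{n-k}=0\}$, $S=\{f=0\}\ni p=0$ with $f$ having a (stratified) isolated singularity at $0$, and $\LL=\CC\, f^{-\tl{c_i}}\,y_{n-k}^{\beta}$ with $\beta=m(\gamma)\notin\ZZ$ by nonresonance.

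In this second case I would argue as in that earlier discussion. Setting $\N=i_!(\CC\, y_{n-k}^{\beta})$, one has $i_!\simeq Ri_*$ on this rank-one twisted sheaf because $\beta\notin\ZZ$, the sheaf $\N$ is perverse up to shift, and $\psi_f(\N)_p\simeq\phi_f(\N)_p$; by the $t$-exactness of $\phi_f$ together with the isolated-singularity hypothesis, $\phi_f(\N)$ is a perverse sheaf (up to shift) supported on $\{p\}$, so that $H^j\psi_f(\N)_p\simeq H^j\phi_f(\N)_p\simeq 0$ for $j\neq n-k-1$. Therefore $(Ri_*\M)_p\simeq 0$ will follow as soon as the monodromy $\Phi$ on $H^{n-k-1}\psi_f(\N)_p$ has no eigenvalue equal to $\exp(2\pi\sqrt{-1}\,\tl{c_i})$. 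Using the non-degeneracy hypotheses one checks --- possibly after an admissible monomial change of coordinates --- that $f$ is Newton non-degenerate at $0$; then \cite[Theorem 5.5]{M-T-2}, exactly as in the Proposition stated above, confines the eigenvalues of $\Phi$ to the finite set $E_p$ associated with the Newton polyhedron $\Gamma_+(f)$, whose defining formula depends on $c$ only affine-linearly, through $\beta$. Since there are only finitely many such points $p$ over all boundary orbits, the requirement $\exp(2\pi\sqrt{-1}\,\tl{c_i})\notin E_p$ for all of them excludes only a countable union of translated hyperplanes in the parameter space of $c$, hence holds for generic $c$. Combining the two cases gives $(Ri_*\M)_p\simeq 0$ for every $p\in X\setminus i(T_0)$, and the theorem follows.

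The step I expect to be the main obstacle is exactly this last one: deriving from the stated non-degeneracy hypotheses the Newton non-degeneracy of the local equation of $S$ at the singular points lying on codimension-one boundary orbits (so that \cite[Theorem 5.5]{M-T-2} becomes available and yields the finite exceptional set $E_p$), and then controlling the dependence of $E_p$ on $c$ precisely enough to legitimately pass to generic $c$. By comparison, the normal-slice reduction of the first case and the bookkeeping of shifts are routine.
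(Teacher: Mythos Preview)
Your approach matches the paper's (which is itself only a three-line reduction to the proofs of Theorems~\ref{SVTM} and~\ref{MVTM}): compactify by the toric variety attached to the dual fan of $\Delta$, check $(Ri_*\M)_p\simeq 0$ on the boundary by combining the normal-crossing argument of Theorem~\ref{MVTM} with the vanishing-cycle argument preceding Theorem~\ref{SVTM}, and then apply Artin vanishing.

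One point in your case split is not right. In the second case you assert that $\gamma_j$ is a point for every $j\neq i$, and hence that $S_j$ misses $X_\gamma$ entirely; the hypotheses do not force this. For a facet $\gamma$ one may have $\dim\gamma_j>0$, even $\dim\gamma_j=n-k-1$, for several indices $j$, and then $S_j$ does meet $X_\gamma$ and may itself acquire isolated singularities there. What the hypotheses \emph{do} give --- and what suffices for your local reduction --- is that the singular points of the various $S_j\cap X_\gamma$ are pairwise disjoint and avoid all the other $S_j$: if $p$ is singular on $(P_i^{\gamma_i})^{-1}(0)$ then $dP_i^{\gamma_i}(p)=0$, so $p$ cannot also lie on $(P_j^{\gamma_j})^{-1}(0)$ for $j\neq i$ without violating the non-degenerate complete intersection condition imposed on $J=\{i,j\}$. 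Thus near each such $p$ only the single branch $S_i$ is present, and your single-hypersurface analysis applies verbatim; only the justification needs this correction. This is essentially the paper's remark that each $S_i=\overline{i(P_i^{-1}(0))}$ has only stratified isolated singular points in $X$.

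As for the Newton non-degeneracy obstacle you flag at the end: you do not actually need it, and the paper does not invoke it for this theorem either. The isolated-singularity hypothesis already forces $\phi_f(\N)$ to be supported at $\{p\}$, so the monodromy $\Phi$ acts on a fixed finite-dimensional space. Instead of \cite[Theorem~5.5]{M-T-2}, take an embedded resolution of $\{f\cdot y_{n-k}=0\}$ at $p$; the eigenvalues of $\Phi$ on $\psi_f(\N)_p$ then lie among the $m_E$-th roots of $\exp(2\pi\sqrt{-1}\,n_E\beta)$, where $m_E,n_E\in\ZZ$ are the multiplicities of the exceptional components along $f$ and along $y_{n-k}$. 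Since $\beta=m(\gamma)$ is affine-linear in $c$, the exclusion $\exp(2\pi\sqrt{-1}\,\tl{c_i})\notin\{\text{eigenvalues}\}$ again removes only a countable union of affine hyperplanes, and therefore holds for generic $c$.
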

\begin{proof} 
Let $\Sigma_0$ be the dual fan of $\Delta$ in $\RR^{n-k}$ 
and $X$ the (possibly singular) toric 
variety associated to it. Then our assumptions imply 
that for any $1 \leq i \leq k$ the hypersurface 
$S_i= \overline{i ( P_i^{-1}(0))} 
\subset X$ has only stratified 
isolated singular points in $X$ and we can prove 
the assertion following the proofs of Theorems 
\ref{SVTM} and \ref{MVTM}. 
\end{proof} 

For a face $\gamma$ of 
$\Delta$ and $1 \leq i \leq k$ such that $\dim \gamma_i < 
\dim \gamma \leq n-k-1$ 
the hypersurface 
$(P_i^{\gamma_i})^{-1}(0) 
\subset T_{\gamma}$ of $T_{\gamma}$ is smooth or 
has non-isolated singularities. In the latter case, 
we cannot prove the concentration in 
Theorem \ref{SMVTM} by our methods. 
This is the reason why we do not allow 
such cases in our assumptions of Theorem \ref{SMVTM}. 
However, in the very special case where the Newton polytopes 
$\Delta_1, \ldots, \Delta_k$ are similar each other, 
we do not have this problem and obtain  
the following simpler result. 

\begin{theorem}\label{SSMVTM} 
Assume that $\dim \Delta = n-k$, the Newton polytopes 
$\Delta_1, \ldots, \Delta_k$ are similar each other and 
for any face $\gamma$ of $\Delta$ and $1 \leq i \leq k$ 
the hypersurface $(P_i^{\gamma_i})^{-1}(0) 
\subset T_{\gamma}$ of $T_{\gamma}$ 
has only isolated singular points. Assume moreover that 
for any face $\gamma$ of 
$\Delta$ such that $\dim \gamma < 
\dim \Delta = n-k$ 
and any subset 
$J \subset \{ 1,2, \ldots, k \}$ such that 
$\sharp J \geq 2$ the subvariety 
\begin{equation} 
\{ x \in T_0=(\CC^*)^{n-k} \ | \ 
P_i^{\gamma_i}(x)=0 \ \ 
(i \in J) \} \subset T_0 
\end{equation}
of $T_0$ is a non-degenerate 
complete intersection. 
Then for generic parameter vectors 
$c \in \CC^n$ we have the 
concentration 
\begin{equation} 
H^j(W ; \LL ) \simeq 
0 \qquad (j \not= n-k). 
\end{equation} 
\end{theorem}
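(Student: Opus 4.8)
The strategy is to reduce Theorem \ref{SSMVTM} to Theorem \ref{SMVTM} by exploiting the extra hypothesis that $\Delta_1, \ldots, \Delta_k$ are similar. Recall that the only obstruction to applying Theorem \ref{SMVTM} was the possibility that for some face $\gamma \prec \Delta$ and some $i$ with $\dim \gamma_i < \dim \gamma \leq n-k-1$ the hypersurface $(P_i^{\gamma_i})^{-1}(0) \subset T_\gamma$ is singular in a non-isolated way. The point is that when the $\Delta_i$ are similar, the combinatorics forces $\dim \gamma_i = \dim \gamma$ for \emph{every} $i$ whenever $\gamma$ is a proper face of $\Delta$. Indeed, if $\Delta_i = \lambda_i \Delta_1$ (up to translation) for positive scalars $\lambda_i$, then the face $\gamma_i \prec \Delta_i$ cut out by a linear functional $u$ is $\lambda_i$ times the corresponding face of $\Delta_1$, hence $\dim \gamma_1 = \dim \gamma_2 = \cdots = \dim \gamma_k = \dim \gamma$. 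Consequently the problematic case in Theorem \ref{SMVTM} never arises here, and the hypothesis ``$(P_i^{\gamma_i})^{-1}(0) \subset T_\gamma$ has only isolated singular points'' together with the non-degenerate complete intersection condition for $\sharp J \geq 2$ is exactly what Theorem \ref{SMVTM} requires (for $\sharp J = 1$ the singular-point condition on $(P_i^{\gamma_i})^{-1}(0)$ covers it, since $\dim \gamma_i = \dim \gamma$ always).

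First I would make the similarity hypothesis precise: fix $\Delta_1$ and write $\Delta_i = \lambda_i \cdot \Delta_1 + w_i$ for $\lambda_i \in \QQ_{>0}$ and $w_i \in \RR^{n-k}$, so that all $\Delta_i$ have the same normal fan, which then also equals the dual fan $\Sigma_0$ of $\Delta = \Delta_1 + \cdots + \Delta_k$. Next I would record the dimension equality $\dim \gamma_i = \dim \gamma$ for proper faces $\gamma \prec \Delta$, using that for $\gamma \neq \Delta$ the dual cone $\sigma(\gamma)$ has positive dimension and picking $u$ in its relative interior cuts out the parallel faces $\gamma_i$ of the homothetic polytopes $\Delta_i$. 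Then I would verify, face by face and subset by subset, that the hypotheses of Theorem \ref{SMVTM} are satisfied: the case $J = \{i\}$, $\dim \gamma_i = \dim \gamma = n-k-1$ is handled by the isolated-singularity assumption on $(P_i^{\gamma_i})^{-1}(0)$; the case $\sharp J \geq 2$ is the assumed non-degenerate complete intersection condition; and the case $J = \{i\}$ with $\dim\gamma < n-k-1$ again falls under the isolated-singularity hypothesis because $\dim\gamma_i = \dim\gamma$, so $(P_i^{\gamma_i})^{-1}(0)$ is a hypersurface in $T_\gamma$ with isolated singularities (not a case excluded in Theorem \ref{SMVTM}). Having checked all hypotheses, I would invoke Theorem \ref{SMVTM} to conclude the concentration $H^j(W;\LL) \simeq 0$ for $j \neq n-k$ and generic $c$.

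The main obstacle is the bookkeeping around the borderline case $J = \{i\}$, $\dim \gamma = n-k-1$: in Theorem \ref{SMVTM} this case demands isolated singularities of $(P_i^{\gamma_i})^{-1}(0)$ \emph{only} when $\dim \gamma_i = \dim \gamma$, and one must be sure that the similarity hypothesis genuinely forces this equality rather than merely allowing it. A secondary subtlety is to make sure the ``generic $c$'' in Theorem \ref{SMVTM} transfers correctly — here there is nothing to do, since we are directly in its setting, but one should note that no stronger genericity than in Theorem \ref{SMVTM} is needed. I expect no serious analytic difficulty: the entire content is combinatorial-geometric (the homothety argument for face dimensions) plus a careful translation of hypotheses, after which Theorem \ref{SMVTM} does all the work. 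One could alternatively argue directly along the lines of the proofs of Theorems \ref{SVTM} and \ref{MVTM}, taking the toric compactification $X$ associated to $\Sigma_0$ and checking that each $S_i = \overline{i(P_i^{-1}(0))}$ meets every orbit $X_\gamma$ in a hypersurface with at worst stratified isolated singularities — but reducing to Theorem \ref{SMVTM} is cleaner and avoids repeating the vanishing-cycle and distance-function arguments.
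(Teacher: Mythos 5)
You have misread the case split in the hypotheses of Theorem \ref{SMVTM}. In that theorem, the clause allowing isolated singularities of $(P_i^{\gamma_i})^{-1}(0)\subset T_\gamma$ applies \emph{only} when $J=\{i\}$ \emph{and} $\dim\gamma_i=\dim\gamma=n-k-1$, i.e.\ only when $\gamma$ is a facet of $\Delta$. For $J=\{i\}$ with $\dim\gamma_i=\dim\gamma<n-k-1$ one is in the ``Otherwise'' branch, which demands that $\{P_i^{\gamma_i}=0\}\subset T_0$ be a non-degenerate complete intersection --- in particular smooth, so $(P_i^{\gamma_i})^{-1}(0)\subset T_\gamma$ must be smooth there. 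Your proposal asserts the opposite, that this case ``again falls under the isolated-singularity hypothesis \ldots\ not a case excluded in Theorem \ref{SMVTM}''; that is incorrect. Consequently Theorem \ref{SSMVTM}, whose hypothesis 3 permits isolated singularities of $(P_i^{\gamma_i})^{-1}(0)\subset T_\gamma$ for \emph{every} proper face $\gamma$ regardless of $\dim\gamma$, has strictly weaker hypotheses than Theorem \ref{SMVTM} and cannot be obtained from it by mere specialization. Your combinatorial observation --- that similarity of the $\Delta_i$ forces $\dim\gamma_i=\dim\gamma$ for all $i$ and all proper $\gamma$, thereby eliminating the ``non-isolated singularities'' pathology flagged in the paragraph preceding the theorem --- is correct and is precisely the role the similarity hypothesis plays; but it shows why the \emph{direct} argument goes through, not that one can invoke Theorem \ref{SMVTM} as a black box.

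The intended argument is the ``alternative'' you mention in your last paragraph and then set aside. One takes the (possibly singular) toric compactification $X$ of $T_0$ associated to the dual fan $\Sigma_0$ of $\Delta$, forms the closures $S_i=\overline{i(P_i^{-1}(0))}\subset X$, and uses the similarity of the $\Delta_i$ together with hypothesis 3 of Theorem \ref{SSMVTM} to see that each $S_i$ has only stratified isolated singular points on every $T_0$-orbit $X_\gamma$, including those of codimension $\geq 2$. The $\sharp J\geq 2$ non-degeneracy hypothesis then controls the intersections of the different $S_i$ along the boundary. After that, one runs the argument from the discussion following Theorem \ref{VTM} --- taking normal slices, using the $t$-exactness of the vanishing-cycle functor and \cite[Theorem 5.5]{M-T-2} to control the eigenvalues of the monodromy on $\psi_f(\N)_p$ --- to conclude $(Ri_*\M)_p\simeq 0$ for all $p\in X\setminus i(T_0)$ and generic $c$, and then finish by Artin vanishing as in Theorems \ref{VTM} and \ref{MVTM}. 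You should replace the reduction-to-\ref{SMVTM} paragraph with this direct verification; the homothety argument for $\dim\gamma_i=\dim\gamma$ stays, but it now serves to justify the stratified-isolated-singularity claim for $S_i$ on all orbits rather than to check the hypotheses of Theorem \ref{SMVTM}.
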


\section{Some results on the twisted 
Morse theory}\label{sec:4}

In this section, we prepare some 
auxiliary results on the twisted 
Morse theory which will be used in 
Section \ref{sec:5}. 
The following proposition 
is a refinement of the 
results in \cite[page 10]{Esterov}. 
See also \cite[Proposition 7.1]{E-T-2}. 

\begin{proposition}\label{MIS} 
Let $T$ be an algebraic torus $( \CC^*)^n_x$ 
and $T= \sqcup_{\alpha} Z_{\alpha}$ its 
algebraic stratification. In particular 
we assume that each stratum 
$Z_{\alpha}$ in it is smooth. 
Let $h(x)$ be a Laurent 
polynomial on $T=( \CC^*)^n_x$ 
such that the hypersurface $\{ h=0 \} \subset 
T$ intersects $Z_{\alpha}$ 
transversally for 
any $\alpha$. For 
$a \in \CC^n$ consider the 
(possibly multi-valued) function $g_a(x):= 
h(x) x^{-a}$ on $T$. 
Then there exists a non-empty 
Zariski open subset $\Omega \subset \CC^n$ of 
$\CC^n$ such that the restriction 
$g_a|_{Z_{\alpha}}: 
Z_{\alpha} \longrightarrow 
\CC$ of $g_a$ to $Z_{\alpha}$ 
has only isolated 
non-degenerate (i.e. 
Morse type) critical points 
for any $a \in \Omega 
\subset \CC^n$ and $\alpha$. 
\end{proposition}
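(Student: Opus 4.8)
The plan is to prove the proposition by a Bertini-type / transversality argument, treating the statement "every critical point of $g_a|_{Z_\alpha}$ is isolated and nondegenerate" as genericity of $a$ in the sense of jets. First I would fix one stratum $Z=Z_\alpha$, which is a smooth locally closed subvariety of $T=(\CC^*)^n_x$ meeting $\{h=0\}$ transversally; since the statement is a finite conjunction over the strata and a finite intersection of Zariski-open conditions, it suffices to produce, for each $\alpha$ separately, a nonempty Zariski-open $\Omega_\alpha\subset\CC^n$ doing the job, and then set $\Omega=\bigcap_\alpha\Omega_\alpha$. Next, since $g_a(x)=h(x)x^{-a}$ is multivalued but $d\log g_a = d\log h - \sum_i a_i\, d\log x_i$ is a genuine (single-valued) closed algebraic $1$-form on $T$, a point $p\in Z$ is a critical point of $g_a|_Z$ exactly where the restriction of $d\log g_a$ to $Z$ vanishes (on $Z\setminus\{h=0\}$; and the transversality hypothesis handles what happens near $\{h=0\}$, see below), so the whole problem can be phrased in terms of the map $p\mapsto (d\log g_a)|_{T_pZ}\in T_p^\vee Z$, which depends affinely on $a\in\CC^n$.

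The key computation is the incidence-variety / parametrized-transversality step. I would introduce the variety
\begin{equation}
\Xi=\{(a,p)\in\CC^n\times (Z\setminus\{h=0\})\ \mid\ (d\log g_a)|_{T_pZ}=0\}
\end{equation}
and the "universal" evaluation map $\Phi:\CC^n\times (Z\setminus\{h=0\})\to \mathrm{(rank\ }r\text{ bundle)}$, $r=\dim Z$, sending $(a,p)$ to $(d\log g_a)|_{T_pZ}$, whose zero set is $\Xi$. The crucial point is that the differential of $\Phi$ in the $a$-directions alone is already surjective: varying $a$ subtracts an arbitrary element of the span of $d\log x_1,\dots,d\log x_n$, and because $Z\subset(\CC^*)^n$ the restrictions $(d\log x_1)|_{T_pZ},\dots,(d\log x_n)|_{T_pZ}$ span $T_p^\vee Z$ (they are the differentials of the coordinate characters, which generate the character lattice of the torus and restrict to a spanning set on any subtorus-translate — and more generally on any smooth subvariety of the torus, since a covector on $(\CC^*)^n$ vanishing on all $d\log x_i$ is zero). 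Hence $\Phi$ is a submersion onto the zero section, so $\Xi$ is smooth of dimension $n$, and the projection $\pi:\Xi\to\CC^n$ is a morphism between varieties of the same dimension. By generic smoothness (Sard / generic-flatness in characteristic $0$), there is a nonempty Zariski-open $\Omega'\subset\CC^n$ over which $\pi$ is smooth, i.e. has finite fibers and surjective differential; the finiteness of the fiber gives that $g_a|_Z$ has only isolated critical points on $Z\setminus\{h=0\}$, and the surjectivity of $d\pi$ translates — via the standard identification of $\ker d\pi$ with the Hessian kernel, which one checks by a direct local computation in coordinates on $Z$ — into nondegeneracy (Morse type) of each such critical point.

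Two loose ends remain, and the second is where the transversality hypothesis is genuinely used. First, one must ensure no critical points escape to, or pile up along, $\{h=0\}\cap Z$: near such a point $g_a=h\cdot x^{-a}$ vanishes to first order transversally within $Z$ (this is exactly the transversality of $\{h=0\}$ with $Z_\alpha$), so $g_a|_Z$ has no critical point there at all — $dg_a=x^{-a}\,dh+\dots$ is nonzero on $T_pZ$ — and in particular the open conditions proved on $Z\setminus\{h=0\}$ suffice for all of $Z$; I would spell this out with a one-line local model. Second, one checks that the "spanning" claim $(d\log x_i)|_{T_pZ}$ generate $T_p^\vee Z$ holds at every point of $Z$, not just generically; this is immediate since $T_pZ\hookrightarrow T_p(\CC^*)^n$ and the $d\log x_i$ form a global coframe of the torus. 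The main obstacle I anticipate is precisely the clean bookkeeping at the boundary $\{h=0\}$ and the verification that "$d\pi$ surjective $\iff$ Hessian nondegenerate" — both are routine in local coordinates but need to be written carefully so that the final $\Omega=\bigcap_\alpha\Omega_\alpha$ is manifestly Zariski-open and nonempty; everything else is the standard parametrized Sard argument made algebraic.
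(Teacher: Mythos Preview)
Your argument is correct and takes a genuinely different route from the paper's. The paper proceeds by covering each stratum $Z_\alpha$ with local coordinate charts: for each subset $I\subset\{1,\dots,n\}$ of size $k=\dim Z_\alpha$ it takes the Zariski open $Z_{\alpha,I}\subset Z_\alpha$ on which the projection $\pi_I:(\CC^*)^n\to(\CC^*)^k$ is locally biholomorphic, writes $g_a$ in these $k$ coordinates as $h_{a,\alpha,I}(x_1,\dots,x_k)/x_1^{a_1}\cdots x_k^{a_k}$, notes that the transversality hypothesis makes $\{h_{a,\alpha,I}=0\}$ smooth in the chart, and then invokes an earlier external result (\cite[Proposition~7.1]{E-T-2}) to produce a Zariski open $\Omega_{\alpha,I}\subset\CC^n$ on which this local function is Morse; the final $\Omega$ is the finite intersection over both the strata $\alpha$ and the chart indices $I$. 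Your approach instead works globally via the single-valued logarithmic differential $d\log g_a$, sets up the incidence variety $\Xi$, and applies generic smoothness to the projection $\Xi\to\CC^n$ directly; the submersion step exploits that the $d\log x_i$ globally trivialize $T^*T$, which is exactly what makes the parametrized-transversality machinery run without ever passing to charts. Your route is more self-contained (no appeal to \cite{E-T-2}) and avoids the extra bookkeeping over the chart index $I$; the paper's route is shorter on the page precisely because it outsources the analytic core to that reference. The two points you flag as ``loose ends'' --- no critical points along $\{h=0\}\cap Z$, and the identification of $\ker d\pi$ with the Hessian kernel --- are indeed routine local checks and your sketch of them is accurate.
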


\begin{proof}
We may assume that each stratum $Z_{\alpha}$ 
is connected. We fix a stratum $Z_{\alpha}$ 
and set $k= \dim Z_{\alpha}$. For a 
subset $I \subset \{ 1,2, \ldots, n \}$ such that 
$|I|=k= \dim Z_{\alpha}$ 
denote by $\pi_I : T=( \CC^*)^n_x 
\longrightarrow ( \CC^*)^k$ the projection 
associated to $I$. We also denote by 
$Z_{\alpha, I} \subset Z_{\alpha}$ the 
maximal Zariski open subset of $Z_{\alpha}$ 
such that the restriction of $\pi_I$ to 
it is locally biholomorphic. 
By the implicit function theorem, the 
variety $Z_{\alpha}$ is covered by such 
open subsets $Z_{\alpha, I}$. For simplicity, 
let us consider the 
case where $I= \{ 1,2, \ldots, 
k \}  
\subset \{ 1,2, \ldots, n \}$. Then 
we may regard $g_a|_{Z_{\alpha}}$ 
locally as a 
function $g_{a, \alpha, I}
(x_1, \ldots, x_k)$ on 
the Zariski open subset $\pi_I (Z_{\alpha, I}) 
\subset ( \CC^*)^k$ of the form 
\begin{equation} 
g_{a, \alpha, I}(x_1, \ldots, x_k) = 
\frac{h_{a, \alpha, I}(x_1, \ldots, 
x_k)}{x_1^{a_1} \cdots x_k^{a_k}}. 
\end{equation} 
By our assumption, the hypersurface 
$\{ h_{a, \alpha, I}=0 \} \subset 
\pi_I (Z_{\alpha, I}) \subset ( \CC^*)^k$ 
is smooth. Then as in the proof of 
\cite[Proposition 7.1]{E-T-2} we can show 
that there exists a non-empty 
Zariski open subset 
$\Omega_{\alpha, I} \subset \CC^n$ 
such that the (possibly multi-valued) function 
$g_{a, \alpha, I}(x_1, \ldots, x_k)$ on 
$\pi_I (Z_{\alpha, I}) \subset ( \CC^*)^k$ 
has only isolated 
non-degenerate (i.e. Morse type) critical points 
for any $a \in \Omega_{\alpha, I} 
\subset \CC^n$. This completes the proof. 
\end{proof}

\begin{corollary}\label{NCR} 
In the situation of Proposition \ref{MIS}, 
assume moreover that for the Newton polytope 
$NP(h) \subset \RR^n$ of $h$ we have 
$\dim NP(h)=n$. Then there exists 
$a \in \Int NP(h)$ such that the restriction 
$g_a|_{Z_{\alpha}}: Z_{\alpha} \longrightarrow 
\CC$ of $g_a$ to $Z_{\alpha}$ has only isolated 
non-degenerate (i.e. Morse type) critical points 
for any $\alpha$. 
\end{corollary}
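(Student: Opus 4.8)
The plan is to deduce this directly from Proposition \ref{MIS} together with an elementary density argument. By Proposition \ref{MIS} there is a non-empty Zariski open subset $\Omega \subset \CC^n$ such that, for every $a \in \Omega$ and every stratum $Z_{\alpha}$, the restriction $g_a|_{Z_{\alpha}}$ has only isolated non-degenerate (Morse type) critical points; note that the same $\Omega$ works simultaneously for all $\alpha$, since an algebraic stratification has only finitely many strata (in the proof of Proposition \ref{MIS} one simply intersects the finitely many sets $\Omega_{\alpha,I}$). It therefore suffices to show that $\Omega \cap \Int NP(h) \neq \emptyset$, and then to take any $a$ lying in this intersection; such an $a$ automatically satisfies the conclusion of the corollary.

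The key step is the observation that $\Int NP(h)$, viewed inside $\RR^n \subset \CC^n$, meets every non-empty Zariski open subset of $\CC^n$. Indeed, because $\dim NP(h) = n$ by hypothesis, $\Int NP(h)$ is a non-empty open subset of $\RR^n$ in the Euclidean topology. Now a polynomial on $\CC^n$ that vanishes on a non-empty Euclidean-open subset of $\RR^n$ must vanish identically (apply coordinate by coordinate the fact that a one-variable polynomial with infinitely many zeros is zero). Hence no proper algebraic subvariety of $\CC^n$ can contain $\Int NP(h)$; equivalently, $\Int NP(h)$ is Zariski dense in $\CC^n$, so it meets the complement of any proper algebraic subvariety, and in particular it meets $\Omega$.

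Combining the two steps, $\Omega \cap \Int NP(h)$ is non-empty, and any $a$ in it has the required property. I do not expect a genuine obstacle here; the only points that need care are that the open set $\Omega$ in Proposition \ref{MIS} can be chosen independently of $\alpha$ (true because the stratification is finite) and that $\Int NP(h)$ is actually non-empty, which is exactly what the added hypothesis $\dim NP(h) = n$ provides.
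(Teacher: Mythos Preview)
Your argument is correct and is precisely the implicit derivation the paper intends: the corollary is stated without proof as an immediate consequence of Proposition~\ref{MIS}, and your density argument (that a non-empty Euclidean-open subset of $\RR^n$ is Zariski dense in $\CC^n$, so it must meet the non-empty Zariski open $\Omega$) is exactly the missing step one fills in. There is nothing to add.
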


Now let $Q_1, \ldots, Q_l$ be 
Laurent polynomials on $T=( \CC^*)^{n}$ 
and for $1 \leq i \leq l$ denote by 
$\Delta_i \subset \RR^{n}$ the Newton 
polytope $NP(Q_i)$ of $Q_i$. 
Set $\Delta = \Delta_1 + \cdots + \Delta_l$.
Then by Corollary \ref{NCR} 
we obtain the following result 
which might be of independent interest.  

\begin{theorem}\label{NVTM} 
Let $\LL$ be a non-trivial local system 
of rank one on $T=( \CC^*)^{n}$. 
Assume that for any $1 \leq i \leq l$ 
we have $\dim \Delta_i =n$ and the subvariety 
\begin{equation} 
Z_i = \{ x \in T \ | \ 
Q_1(x)= \cdots = Q_i(x)=0 \} \subset T 
\end{equation} 
of $T$ is a non-degenerate complete 
intersection. 
Then for any $1 \leq i \leq l$ we have the 
concentration 
\begin{equation} 
H^j(Z_i ; \LL ) \simeq 
0 \qquad (j \not= n-i). 
\end{equation} 
Moreover we have 
\begin{equation}
\dim H^{n-i} (Z_i ; \LL ) =  
\dsum_{\begin{subarray}{c} 
m_1,\ldots,m_i \geq 1\\ 
m_1+\cdots +m_i=n 
\end{subarray}}\Vol_{\ZZ}(
\underbrace{\Delta_1,
\ldots,\Delta_1}_{\text{
$m_1$-times}},\ldots, 
\underbrace{\Delta_i,
\ldots,\Delta_i}_{\text{$m_i$-times}}). 
\end{equation}
\end{theorem}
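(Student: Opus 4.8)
The plan is to prove Theorem \ref{NVTM} by induction on $i$, using Corollary \ref{NCR} together with twisted Morse theory applied to the Laurent polynomials $Q_1, \ldots, Q_i$. First I would treat the base case $i=0$, i.e.\ $Z_0=T=(\CC^*)^n$: since $\LL$ is a non-trivial rank one local system on the torus, a standard computation (e.g.\ via the K\"unneth formula for $(\CC^*)^n$, or the twisted de Rham complex, since at least one coordinate monodromy is non-trivial) gives $H^j(T;\LL)\simeq 0$ for all $j$. This anchors the induction, and it is where the non-triviality hypothesis on $\LL$ enters; note that this already forces the $i=n$ case to be compatible only when $Z_n$ is a finite set of points, which matches the mixed-volume count via Theorem \ref{BKK}.

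Second, for the inductive step from $i-1$ to $i$, I would consider the restriction $g:=Q_i|_{Z_{i-1}} : Z_{i-1}\longrightarrow \CC$. By the non-degenerate complete intersection hypothesis, $Z_{i-1}$ is smooth of dimension $n-i+1$ and admits a Whitney stratification $Z_{i-1}=\sqcup_\alpha Z_\alpha$ adapted to the faces of $\Delta$ in the dual-fan compactification; the hypersurface $\{Q_i=0\}$ meets each stratum transversally. Rather than work with the fixed $Q_i$, I would first perturb it by a generic monomial: by Corollary \ref{NCR}, since $\dim NP(Q_i)=n$, there is $a\in\Int NP(Q_i)$ such that $g_a(x):=Q_i(x)x^{-a}$, restricted to every stratum of a suitable stratification of $Z_{i-1}$ (and of $T$ near the toric boundary), has only isolated non-degenerate critical points. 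The key point is that multiplying by the monomial $x^{-a}$ does not change the zero locus $\{Q_i=0\}$, hence does not change $Z_i$, and — because $a\in\Int NP(Q_i)$ — also does not change the Newton polytope nor the behaviour at infinity, so the non-degeneracy of the complete intersections is preserved and we may assume $g=g_a$ has Morse-type critical points on all strata. Then I apply twisted Morse theory to the real function $|g|$ (or $\log|g|$) on $Z_{i-1}$: the local system $\LL$ has non-trivial monodromy around $\{g=0\}$ in $Z_{i-1}$ — no, more carefully, we should argue $\LL|_{Z_{i-1}}$ is still non-trivial, which holds since $Z_{i-1}$ has non-trivial fundamental group surjecting appropriately onto that of $T$ for a non-degenerate complete intersection (a Lefschetz-type statement), so the twisted cohomology of the ``large'' sublevel set and of $Z_i$ itself are controlled by the Morse-theoretic attaching of cells of a single index $n-i$.

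Third, to make the degree bookkeeping precise I would use the distinguished triangle relating $R\Gamma(Z_{i-1};\LL)$, $R\Gamma(Z_{i-1}\setminus Z_i;\LL)$ and a Gysin/tube contribution, or equivalently argue as in the proof of Theorem \ref{VTM}: pass to the toric compactification $X$ of $T$ given by the dual fan of $\Delta$, let $j:T\hookrightarrow X$ and $\iota$ the relevant open inclusions, and show via the distance-function argument of \cite[Lemma 4.2]{E-T-2} (valid because $a$ lies in the interior of the Newton polytope, so the order of $g_a$ along every codimension-one orbit is controlled) that the relevant pushforward has vanishing stalks on $X\setminus T$; Artin vanishing for the perverse sheaf $R\iota_*(\LL|_{Z_i})$ on the affine variety $Z_i$ then gives $H^j(Z_i;\LL)=0$ for $j>n-i$, and the dual statement (or compactly-supported Artin vanishing together with the isomorphism $H^j_c\simeq H^j$) gives vanishing for $j<n-i$, yielding the concentration. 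Finally, for the dimension formula: from the concentration, $\dim H^{n-i}(Z_i;\LL)=(-1)^{n-i}\chi(Z_i;\LL)$, and since $\LL$ is a rank one local system on $Z_i$, $\chi(Z_i;\LL)=\chi(Z_i)$; now apply Bernstein--Khovanskii--Kushnirenko (Theorem \ref{BKK}) to the non-degenerate complete intersection $Z_i$ defined by $Q_1,\ldots,Q_i$ with Newton polytopes $\Delta_1,\ldots,\Delta_i$, which gives exactly the stated mixed-volume sum.

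I expect the main obstacle to be the inductive step's bridge between the Morse-theoretic statement on strata and the sheaf-theoretic concentration on $Z_i$ — specifically, verifying that the monodromy eigenvalue conditions needed to kill the non-middle cohomology (the analogue of ``$\exp(-2\pi i\alpha)\notin E_p$'' appearing after Proposition in Section \ref{sec:3}, here automatically satisfied at generic/all critical points because $\LL$ is a genuine local system with non-trivial monodromy rather than one tied to a resonant parameter) really do hold at every stratified critical point of $g_a$ on $Z_{i-1}$ and along the toric boundary. Carefully checking that non-triviality of $\LL$ propagates to non-triviality (equivalently, no trivial direct summand) of $\LL$ restricted to each stratum and to $Z_i$, so that the local Morse-datum computations produce no surviving cohomology outside degree $n-i$, is the delicate part; everything else is a routine combination of Corollary \ref{NCR}, the toric-compactification argument of Theorem \ref{VTM}, Artin vanishing, and Theorem \ref{BKK}.
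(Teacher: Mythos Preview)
Your core strategy---induction on $i$, Corollary \ref{NCR} to get a Morse function $g_i=|Q_i(x)x^{-a_i}|$ on $Z_{i-1}$, Artin vanishing, and Theorem \ref{BKK} for the dimension---matches the paper's proof, and the base case and the final Euler-characteristic step are correct. However, one route you propose is genuinely flawed, and the obstacles you anticipate are not actually present.

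The flawed route is your third paragraph: obtaining the lower vanishing via an isomorphism $H^j_c(Z_i;\LL)\simeq H^j(Z_i;\LL)$ using the compactification/distance-function argument of Theorem \ref{VTM} does not work here. That argument requires the monodromy of $\LL$ around \emph{every} codimension-one boundary $T$-orbit to be non-trivial (this is exactly what nonresonance provides). In Theorem \ref{NVTM} the only hypothesis is that $\LL$ is non-trivial on $T$: one coordinate monodromy is non-integral, but the others may well be trivial, and along those boundary components $Ri_*(\LL|_{Z_i})$ will not have vanishing stalks. (The order of $g_a$ at the boundary is irrelevant, since $g_a$ is not part of the monodromy data of $\LL$.) The paper avoids this entirely by getting the lower vanishing directly from the Morse theory plus the induction hypothesis: $Z_{i-1}$ is \emph{smooth} of complex dimension $n-i+1$, so every non-degenerate critical point of $Q_i(x)x^{-a_i}$ on $Z_{i-1}$ gives $g_i$ real Morse index $n-i+1$, and the relative cohomology of a pair of sublevel sets is concentrated in that single degree. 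Zaharia's tameness result (applicable because $a_i\in\Int\Delta_i$) lets the sublevel sets interpolate between $H^*(Z_{i-1};\LL)$ at $t\gg 0$ and $H^*(Z_i;\LL)$ at $0<t\ll 1$, so $H^j(Z_i;\LL)\simeq H^j(Z_{i-1};\LL)=0$ for $j\neq n-i,\, n-i+1$ by the induction hypothesis; one-sided Artin on the $(n-i)$-dimensional affine $Z_i$ then kills degree $n-i+1$. In particular you do not need to argue separately that $\LL|_{Z_{i-1}}$ is non-trivial (the induction hypothesis already hands you its cohomological concentration), and you do not need any stratified Morse theory or monodromy-eigenvalue analysis of the $E_p$ type: that machinery, and Lemma \ref{VL}, enter only later for Theorem \ref{NTM}, where one must work on a genuinely stratified space.
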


\begin{proof} 
We prove the assertion 
by induction on $i$. For $i=0$ 
we have $Z_i=T$ and the assertion is 
obvious. Since $Z_i \subset T$ is affine, 
by Artin's vanishing theorem we have 
the concentration 
\begin{equation}\label{avt} 
H^j(Z_i ; \LL ) \simeq 
0 \qquad (j > n-i= \dim Z_i). 
\end{equation} 
On the other hand, by Corollary \ref{NCR} 
there exists  
$a_i \in \Int NP(Q_i) \subset \RR^n$ 
such that the real-valued function 
\begin{equation} 
g_i: Z_{i-1} \longrightarrow \RR, \qquad 
x \longmapsto 
 | Q_i(x) x^{-a_i} | 
\end{equation} 
has only isolated non-degenerate (Morse type) 
critical points. Note that 
the Morse index of $g_i$ 
at each critical point is 
$\dim Z_{i-1} = n-i+1$.
Let $\Sigma_0$ be the dual fan of the 
$n$-dimensional polytope $\Delta$ in 
$\RR^n$ and $\Sigma$ its smooth 
subdivision. We denote by $X_{\Sigma}$ 
the toric variety associated to $\Sigma$. 
Then $X_{\Sigma}$ is a smooth compactification 
of $T$ such that $D= X_{\Sigma} \setminus T$ 
is a normal crossing divisor in it. 
By our assumption, the closures  
$\overline{Z_{i-1}}, \overline{Z_{i}}
\subset X_{\Sigma}$ 
of $Z_{i-1}, Z_i$ 
in $X_{\Sigma}$ are smooth.
Moreover they intersect $D$ etc. 
transversally.  
Let $U$ be a sufficiently small tubular 
neighborhood of $\overline{Z_i} 
\cap D$ in $\overline{Z_{i-1}}$. Then by 
\cite[Section 3.5]{Zaharia} (see also 
\cite{L-S}), for any $t \in \RR_+$ 
there exist isomorphisms 
\begin{equation} 
H^j( \{ g_i<t \} ; \LL ) \simeq 
H^j( \{ g_i<t \} \setminus 
U ; \LL ) 
 \qquad (j \in \ZZ ). 
\end{equation} 
Moreover the level set 
$g_i^{-1}(t) \cap (Z_{i-1} \setminus 
U)$ of $g_i$ in $Z_{i-1} \setminus 
U$ is compact in $Z_{i-1}$ 
and intersects $\partial U$ transversally 
for any $t \in \RR_+$. 
Hence for $t \gg 0$ we 
have isomorphisms 
\begin{equation} 
H^j( \{ g_i<t \} ; \LL ) 
\simeq H^j(Z_{i-1} ; \LL ) 
 \qquad (j \in \ZZ ). 
\end{equation} 
Moreover for $0 < t \ll 1$ 
we have isomorphisms 
\begin{equation} 
H^j( \{ g_i<t \} ; \LL ) 
\simeq H^j(Z_{i} ; \LL ) 
 \qquad (j \in \ZZ ). 
\end{equation} 
When $t \in \RR$ decreases passing through 
one of the critical values of $g_i$, only 
the dimensions of 
$H^{n-i+1}( \{ g_i<t \} ; \LL )$ and  
$H^{n-i}( \{ g_i<t \} ; \LL )$ 
may change and 
the other cohomology groups 
$H^j( \{ g_i<t \} ; \LL )$  
$(j \not= n-i+1, n-i)$ 
remain the same. Then by our induction 
hypothesis for $i-1$ and \eqref{avt} we obtain 
the desired concentration 
\begin{equation} 
H^j(Z_i ; \LL ) \simeq 
0 \qquad (j \not= n-i). 
\end{equation} 
Moreover the last assertion follows 
from Theorem \ref{thm:2-14}. 
This completes the proof. 
\end{proof} 

From now on, assume also that the $l$-tuple 
$(Q_1, \ldots, Q_l)$ is 
strongly non-degenerate and 
$\dim \Delta_l=n$. Let $T= \sqcup_{\alpha} 
Z_{\alpha}$ be the algebraic stratification 
of $T$ associated to the hypersurface 
$S= \cup_{i=1}^{l-1} Q_i^{-1}(0) \subset 
T$ and set $M=T \setminus S$. Then 
by Corollary \ref{NCR}  
there exists $a \in \Int ( \Delta_l)$ 
such that the restriction 
of the (possibly multi-valued) function 
$Q_l(x)x^{-a}$ 
to $Z_{\alpha}$ has only isolated 
non-degenerate (i.e. Morse type) 
critical points 
for any $\alpha$. In particular, it has only 
stratified isolated singular points. 
We fix such $a \in \Int ( \Delta_l)$ 
and define a real-valued function 
$g: T \longrightarrow \RR_+$ by 
$g(x)=| Q_l(x)x^{-a} |$. 
For $t \in \RR_+$ we set also 
\begin{equation} 
M_t= \{ x \in M=T \setminus S \ | \ 
g(x)<t \} \subset M. 
\end{equation} 
Then we have the following result. 

\begin{lemma}\label{VL} 
Let $\LL$ be a local system on 
$M=T \setminus S$. 
Then for any $c >0$ 
there exists a sufficiently small 
$0 < \e \ll 1$ such that we have the 
concentration 
\begin{equation} 
H^j( M_{c+ \e}, M_{c- \e} ; \LL ) \simeq 
0 \qquad (j \not= n). 
\end{equation} 
\end{lemma}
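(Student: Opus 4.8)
The plan is to prove this by (stratified) Morse theory for the real-valued function $g$ on the stratified space $(T, \{ Z_{\alpha} \} )$, localizing the variation $H^j( M_{c+ \e} , M_{c- \e} ; \LL )$ at the finitely many stratified critical points of $g$ lying on the level set $g^{-1}(c)$. The first point to check is that $g \colon T \longrightarrow \RR_{\geq 0}$ is proper. Indeed, since $a \in \Int ( \Delta_l )$, for the primitive generator $u$ of any ray of a smooth complete fan refining the dual fan of $\Delta$ the order of $Q_l(x) x^{-a}$ along the corresponding toric divisor is $\min_{b \in \Delta_l - a } \langle u , b \rangle$, which is strictly negative because $0$ lies in the interior of $\Delta_l - a$; hence $g(x) \to + \infty$ as $x$ tends to the toric boundary, so $g^{-1}( [0,t] )$ is compact in $T$ for every $t$. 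Moreover, by Corollary \ref{NCR} the chosen $a$ makes the restriction of $g = | Q_l(x) x^{-a} |$ to every stratum $Z_{\alpha}$ a real-analytic function with only isolated non-degenerate critical points, so the set of stratified critical points of $g$ on $g^{-1}(c)$ is a finite set $\{ p_1 , \ldots , p_r \}$.

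Since $g$ is proper, the standard Morse-theoretic deformation argument (cf.\ the proof of Theorem \ref{NVTM}, \cite[Section 3.5]{Zaharia} and \cite{L-S}) shows that for $\e \ll 1$ the sublevel set $\{ g < c+ \e \}$ deformation retracts onto $\{ g < c - \e \} \cup \bigcup_{k} ( B_k \cap \{ g < c + \e \} )$, where $B_k$ is a small ball centered at $p_k$. Intersecting with the open stratum $M = T \setminus S$ and excising, one obtains
\begin{equation}
H^j( M_{c+ \e} , M_{c- \e} ; \LL ) \simeq \bigoplus_{k=1}^{r} H^j( B_k \cap M_{c+ \e} , \ B_k \cap M_{c- \e} ; \LL ) \qquad ( j \in \ZZ ),
\end{equation}
so it suffices to show that each local summand is concentrated in degree $n$. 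For a critical point $p_k$ lying in the open stratum $M$ this is immediate: near $p_k$ the function $F = Q_l(x) x^{-a}$ is nowhere zero, so $\log g = \mathrm{Re} ( \log F )$ is pluriharmonic, and at a non-degenerate critical point its real Hessian has signature $(n,n)$; hence $g$ is a Morse function of index $n$ at $p_k$, and the Morse lemma with coefficients in $\LL$ gives $H^{\bullet}( B_k \cap M_{c+ \e} , B_k \cap M_{c- \e} ; \LL ) \simeq \LL_{p_k} \otimes H^{\bullet}( D^n , \partial D^n ; \CC )$, which is concentrated in degree $n$.

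It remains to treat a critical point $p_k$ lying on a stratum $Z_{\alpha} \subset S$, say of dimension $d < n$. The (strong) non-degeneracy hypotheses force $S$ to be a normal crossing divisor near $p_k$, with $Z_{\alpha}$ cut out by $n-d$ of its local branches; choose local analytic coordinates $(u,v) \in \CC^{d} \times \CC^{n-d}$ centered at $p_k$ with $Z_{\alpha} = \{ v = 0 \}$, $S = \{ v_1 \cdots v_{n-d} = 0 \}$ and $M = \CC^{d} \times ( \CC^* )^{n-d}$ locally, and write $g = c \cdot e^{\mathrm{Re}\, G(u,v)}$ with $G$ holomorphic and $G(u,0)$ having a non-degenerate critical point at $0$. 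By the Goresky--MacPherson description the local summand decomposes as the tangential Morse data of $g|_{Z_{\alpha}}$ tensored with the normal Morse data; the tangential factor is concentrated in degree $d$ exactly as in the previous case, so it is enough to see that the normal factor is concentrated in degree $n-d$. The latter is the relative cohomology, with coefficients in $\LL$, of a pair obtained by intersecting two half-open sublevel sets of $g$ restricted to a normal slice with the complement of the branches of $S$; for $n-d = 1$ this pair has the homotopy type of $( S^1 , \mathrm{pt} )$, whose local system cohomology is concentrated in degree $1$ for every value of the monodromy, and the general case follows by an analogous (if more laborious) local analysis, based on the fact that the ``upper'' half-slice is homotopy equivalent to $( \CC^* )^{n-d}$ while the ``lower'' one is its complement with the deepest vanishing cell removed. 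Combining the two cases, every summand lies in degree $n$ and the lemma follows.

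The main obstacle is this last computation, namely the concentration of the normal Morse data at a critical point of $g$ sitting on a boundary stratum $Z_{\alpha} \subset S$. The delicate point is that for an arbitrary local system $\LL$ the extension $Rj_* \LL$ to $T$ (with $j \colon M \hookrightarrow T$) need not be perverse, so one cannot simply invoke the $t$-exactness of the vanishing cycle functor as in \cite[Section 5.2]{Dimca}; one must instead verify directly, and uniformly in $\LL$, that pairing the half-open sublevel sets of $g$ against the normal-crossing branches produces a relative cohomology concentrated in the middle degree, and it is here that the full strength of the genericity of $a$ furnished by Corollary \ref{NCR} is needed.
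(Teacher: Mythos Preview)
Your argument has two genuine gaps.

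\textbf{First, the properness claim is false.} The function $g(x)=|Q_l(x)x^{-a}|$ is \emph{not} proper on $T$. Your order computation is correct along a generic point of a toric divisor, but the closure $\overline{Q_l^{-1}(0)}$ in $X_{\Sigma}$ meets the boundary $D$, and near any point of $\overline{Q_l^{-1}(0)}\cap D$ the function $Q_l(x)x^{-a}$ is of the indeterminate form ``zero times pole''; in local coordinates it looks like $f(y)/y_n^m$ with $f$ vanishing transversally to $\{y_n=0\}$, so one can approach the boundary while keeping $g$ bounded. Hence $g^{-1}([0,t])$ is not compact, and the Morse-theoretic localization you invoke does not follow. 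This is precisely why the paper first passes to a smooth toric compactification $X_{\Sigma}$, excises a tubular neighbourhood $U$ of $\overline{Q_l^{-1}(0)}\cap D$, and uses the results of Zaharia and Libgober--Sperber to identify $H^j(M_t;\LL)$ with $H^j(M_t\setminus U;\LL)$, where the relevant level sets are now compact and meet $\partial U$ transversally. Your citation of these references is in the right spirit, but you have misread what they are being used for.

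\textbf{Second, your belief that $Rj_*\LL$ need not be perverse is mistaken, and this is exactly what makes the boundary-stratum computation clean.} The inclusion $j\colon M=T\setminus S\hookrightarrow T$ is an open affine embedding (the complement of a hypersurface), so $Rj_*$ is $t$-exact for the perverse $t$-structure; since $\LL[n]$ is perverse on $M$, the sheaf $Rj_*\LL[n]$ is perverse on $T$ for \emph{every} local system $\LL$. The paper then identifies the local summand at a stratified critical point $p_i$ with $H^{j-1}\phi_{f_i}(Rj_*\LL)_{p_i}$ for the holomorphic germ $f_i=\log h-\log h(p_i)$, uses $t$-exactness of $\phi_{f_i}$ together with ${\rm supp}\,\phi_{f_i}(Rj_*\LL)\subset\{p_i\}$ to conclude the concentration in a single degree. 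Your attempted direct normal-slice computation is unnecessary, and as written (``an analogous, if more laborious, local analysis'') it is not a proof.
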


\begin{proof} 
Let $\Sigma_0$ be the dual fan of the 
$n$-dimensional polytope $\Delta$ in 
$\RR^n$ and $\Sigma$ its smooth 
subdivision. We denote by $X_{\Sigma}$ 
the toric variety associated to $\Sigma$. 
Then $X_{\Sigma}$ is a smooth compactification 
of $T$ such that $D= X_{\Sigma} \setminus T$ 
is a normal crossing divisor in it. 
By the strong non-degeneracy 
of $(Q_1, \ldots, Q_l)$, the hypersurface 
$\overline{Q_l^{-1}(0)} \subset X_{\Sigma}$ 
intersects $D$ etc. transversally.
Let $U$ be a sufficiently small tubular 
neighborhood of $\overline{Q_l^{-1}(0)} 
\cap D$ in $X_{\Sigma}$ and for $t \in \RR_+$ 
set $M_t^{\prime}=M_t \setminus 
U$. Then by 
\cite[Section 3.5]{Zaharia},  
for any $t \in \RR_+$ there exist isomorphisms 
\begin{equation} 
H^j( M_t ; \LL ) \simeq 
H^j( M_t^{\prime} ; \LL )
 \qquad (j \in \ZZ ). 
\end{equation} 
Moreover the level set 
$g^{-1}(t) \cap (T \setminus 
U)$ of $g$ in $T \setminus 
U$ is compact in $T$ 
and intersects $\partial U$ transversally 
for any $t \in \RR_+$. 
For $c>0$ let $p_1, \ldots, p_r  
\in T \setminus g^{-1}(0)= 
T \setminus Q_l^{-1}(0)$ 
be the stratified isolated singular 
points of the function 
$h(x)=Q_l(x)x^{-a}$ in $T$ 
such that $g(p_i)=|h(p_i)|=c$.
Note that we have 
\begin{equation} 
g(x)=|h(x)|= \exp [ {\rm Re} 
\{ \log h(x) \} ].  
\end{equation}
Then there exist small open balls 
$B_i$ centered at $p_i$ in $T$ and 
$0 < \e \ll 1$ such 
that we have isomorphisms 
\begin{equation} 
H^j( M_{c+ \e}^{\prime}, 
M_{c- \e}^{\prime} ; \LL ) 
\simeq \bigoplus_{i=1}^r 
H^j( B_i \cap M_{c+ \e}, 
B_i \cap M_{c- \e} ; \LL )
 \qquad (j \in \ZZ ). 
\end{equation}
For $1 \leq i \leq r$ by taking a 
local branch $\log h$ of the 
logarithm of the function $h \not= 0$ on a 
neighborhood of $p_i \in T \setminus 
h^{-1}(0)$ we set 
$f_i= \log h - \log h(p_i)$. 
Then $f_i$ has also a stratified 
isolated singular point at $p_i$. 
Let $F_i \subset B_i$ be the Milnor fiber 
of $f_i$ at $p_i \in f_i^{-1}(0)$. 
Then for any $1 \leq i \leq r$ 
by shrinking $B_i$ 
if necessary we can 
easily prove the isomorphisms 
\begin{equation} 
H^j( B_i \cap M_{c+ \e}, 
B_i \cap M_{c- \e} ; \LL ) 
\simeq 
H^j( B_i \setminus S, 
F_i \setminus S ; \LL ) 
 \qquad (j \in \ZZ ). 
\end{equation}
Let $j : M=T \setminus S \hookrightarrow T$ 
be the inclusion. Since the Milnor fibers 
$F_i \subset B_i$ intersect 
each stratum $Z_{\alpha}$ 
transversally, we have also isomorphisms 
\begin{equation} 
H^j( B_i \setminus S, 
F_i \setminus S ; \LL ) 
\simeq H^{j-1} \phi_{f_i}
( Rj_* \LL )_{p_i} 
 \qquad (j \in \ZZ ), 
\end{equation}
where $\phi_{f_i}$ are 
Deligne's vanishing cycle 
functors. Hence by (the proof of) 
\cite[Proposition 6.1.1]{Dimca} 
the assertion follows from 
\begin{equation} 
{\rm supp} \  \phi_{f_i}
( Rj_* \LL ) \subset \{ p_i \} 
 \qquad (1 \leq i \leq r) 
\end{equation} 
and the fact that 
$Rj_* \LL$ and 
$\phi_{f_i}( Rj_* \LL )$ 
are perverse sheaves 
(up to some shifts). This completes 
the proof. 
\end{proof}

\section{A new vanishing theorem}\label{sec:5}

Now let $P_1, \ldots, P_k$ be 
Laurent polynomials on $T_0=( \CC^*)^{n-k}$ 
and for $1 \leq i \leq k$ denote by 
$\Delta_i \subset \RR^{n-k}$ the Newton 
polytope $NP(P_i)$ of $P_i$. 
Set $\Delta = \Delta_1 + \cdots + \Delta_k$. 
Let us set $W=T_0 \setminus 
\cup_{i=1}^k P_i^{-1}(0)$ 
and for $(c, \tl{c} ) 
=(c_1, \ldots, c_{n-k}, \tl{c_1}, 
\ldots, \tl{c_k}) \in \CC^n$ 
consider the local system 
\begin{equation} 
\LL = \CC_{W} 
P_1(x)^{\tl{c_1}} \cdots P_k(x)^{\tl{c_k}} 
x_1^{c_1} \cdots x_{n-k}^{c_{n-k}} 
\end{equation} 
on $W$. 

\begin{theorem}\label{NTM} 
Assume that the $k$-tuple of the 
Laurent polynomials $(P_1, \ldots, P_k)$ 
is strongly non-degenerate, 
$(c, \tl{c} ) =
(c_1, \ldots, c_{n-k}, \tl{c_1}, 
\ldots, \tl{c_k}) \notin \ZZ^n$ 
and for any $1 \leq i \leq k$ we have 
$\dim \Delta_i =n-k$. 
Then we have the concentration 
\begin{equation} 
H^j(W ; \LL ) \simeq 
0 \qquad (j \not= n-k). 
\end{equation} 
\end{theorem}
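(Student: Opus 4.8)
The plan is to split into two cases according to the parameter vector $(c,\tl{c})$. Since $W=T_0\setminus\bigcup_{i=1}^kP_i^{-1}(0)$ is a smooth affine variety and $\LL[n-k]$ is perverse on it, Artin's vanishing theorem will already give $H^j(W;\LL)\simeq 0$ for $j>n-k$, so it suffices to prove $H^j(W;\LL)\simeq 0$ for $j<n-k$. After permuting $P_1,\ldots,P_k$ I may assume $\tl{c_k}\notin\ZZ$ whenever some $\tl{c_i}\notin\ZZ$; thus either $\mathrm{(i)}$ $\tl{c_k}\notin\ZZ$, or $\mathrm{(ii)}$ all $\tl{c_i}\in\ZZ$, in which case $c\notin\ZZ^{n-k}$ and $\LL\simeq(\CC_{T_0}x^c)|_W$ (as $\prod_iP_i(x)^{\tl{c_i}}$ is then single valued on $W$).

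In case $\mathrm{(i)}$ I would argue by twisted Morse theory, peeling off $P_k$. Stratify $T_0$ by $\bigcup_{i=1}^{k-1}P_i^{-1}(0)$ (smooth strata by strong non-degeneracy), put $M=T_0\setminus\bigcup_{i=1}^{k-1}P_i^{-1}(0)$, so $W=M\setminus P_k^{-1}(0)$. Since $\dim NP(P_k)=\dim\Delta_k=n-k$, Corollary \ref{NCR} applied to $h=P_k$ provides $a\in\Int(\Delta_k)$ so that the restriction of $g:=|P_k(x)x^{-a}|$ to every stratum — in particular to $M$, hence to $W$ — has only isolated non-degenerate critical points; as $P_kx^{-a}$ is holomorphic and nowhere zero on $W$, each such critical point of $g|_W$ is a complex Morse critical point of $\log(P_kx^{-a})$, of real Morse index exactly $n-k=\dim W$. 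Writing $W_t=\{x\in W\mid g(x)<t\}$: the choice $a\in\Int(\Delta_k)$ forces $|P_kx^{-a}|$ to blow up at the toric boundary $D=X_\Sigma\setminus T_0$ of a toric compactification $X_\Sigma$ of $T_0$, and — proceeding as in the proof of Lemma \ref{VL} (toric compactification, excision of a neighbourhood of $D$, and the transversality of $\overline{\bigcup_iP_i^{-1}(0)}$ with $D$ coming from strong non-degeneracy; cf. \cite[Section 3.5]{Zaharia}, \cite[Lemma 4.2]{E-T-2}) — one checks that $g|_W$ is tame at infinity, so that $H^j(W_t;\LL)\simeq H^j(W;\LL)$ for $t\gg0$ while passing a critical value of $g|_W$ attaches only $(n-k)$-cells; hence $H^j(W_{c+\e};\LL)\simeq H^j(W_{c-\e};\LL)$ for $j\le n-k-2$ and $H^{n-k-1}(W_{c+\e};\LL)\hookrightarrow H^{n-k-1}(W_{c-\e};\LL)$. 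Consequently $H^j(W;\LL)$ embeds into $H^j(W_{t_0};\LL)$ for $0<t_0\ll1$ and all $j<n-k$. But $W_{t_0}=\{x\in M\mid 0<g(x)<t_0\}$ is (after the excision) a punctured tubular neighbourhood of the smooth submanifold $P_k^{-1}(0)\cap M$ of $M$, hence retracts onto the total space $E$ of the boundary circle bundle $\pi\colon E\to P_k^{-1}(0)\cap M$, and the monodromy of $\LL$ along the fibre of $\pi$ is $\exp(2\pi\sqrt{-1}\,\tl{c_k})\neq1$, so $R\pi_*(\LL|_E)\simeq0$ and $H^j(W_{t_0};\LL)\simeq H^j(E;\LL)\simeq0$ for all $j$. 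Combining, $H^j(W;\LL)\simeq0$ for $j<n-k$, which with Artin's bound finishes case $\mathrm{(i)}$.

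In case $\mathrm{(ii)}$ the monodromy of $\LL\simeq(\CC_{T_0}x^c)|_W$ around every $P_i^{-1}(0)$ is trivial, and $\bigcup_{i=1}^kP_i^{-1}(0)$ is a normal crossing divisor in $T_0$ by strong non-degeneracy. I would remove the $P_i^{-1}(0)$ from $T_0$ one at a time, using at each stage the Gysin exact sequence
\[\cdots\to H^{j-2}(D';\LL)\to H^j(U';\LL)\to H^j(U'\setminus D';\LL)\to H^{j-1}(D';\LL)\to H^{j+1}(U';\LL)\to\cdots\]
(valid because the monodromy around $D'$ is trivial); a short diagram chase shows that if $H^*(U';\LL)$ is concentrated in degree $\dim U'$ and $H^*(D';\LL)$ in degree $\dim D'=\dim U'-1$, then $H^*(U'\setminus D';\LL)$ is concentrated in degree $\dim U'$. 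Iterating, and iterating again inside each $P_i^{-1}(0)$, the concentration of $H^*(W;\LL)$ in degree $n-k$ is reduced to the concentration of $H^*(Z_J;\LL|_{Z_J})$ in degree $n-k-|J|$ for all $J\subseteq\{1,\ldots,k\}$, where $Z_J=\{x\in T_0\mid P_j(x)=0\ (j\in J)\}$; and this is exactly Theorem \ref{NVTM}, applicable since $\dim\Delta_j=n-k$ for every $j$, each $Z_J$ is a non-degenerate complete intersection by strong non-degeneracy, and $\CC_{T_0}x^c$ is non-trivial on $T_0$ because $c\notin\ZZ^{n-k}$.

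The hard part will be the tameness at infinity of the Morse function $g|_W$ in case $\mathrm{(i)}$: verifying that $W_t\simeq W$ for $t\gg0$ and that crossing a critical value changes $H^*(W_t;\LL)$ only in degrees $n-k$ and $n-k-1$, with no critical points escaping to the toric boundary. This I expect to handle exactly as in Lemma \ref{VL} and \cite[Lemma 4.2]{E-T-2} — via the toric compactification $X_\Sigma$, the transversality of $\overline{\bigcup_iP_i^{-1}(0)}$ with $D$ from strong non-degeneracy, and the perversity of $Rj_*\LL$ and of its vanishing cycles — after which the remaining steps (elementary Morse theory once the index is $n-k$, the vanishing of $R\pi_*$ on a circle bundle with non-trivial fibre monodromy, and the bookkeeping in case $\mathrm{(ii)}$) are routine.
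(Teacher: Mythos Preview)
Your argument is correct in both cases, but the route differs substantially from the paper's.

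The paper does not work in $T_0$ at all past the first line. It introduces auxiliary variables $t_1,\ldots,t_k$, sets $T=T_0\times(\CC^*)^k_t$, and replaces $W$ by the graph
\[
Z_k=\{(x,t)\in T\mid t_i=P_i(x),\ 1\le i\le k\}\simeq W,
\]
on which $\LL$ becomes the restriction of the monomial local system $\tl{\LL}=\CC_T\,x^c t^{\tl c}$. This puts everything in the exact framework of Theorem \ref{NVTM} and Lemma \ref{VL} (local system on the ambient torus, complete intersections $Z_1\supset\cdots\supset Z_k$ cut by $\tl P_i=t_i-P_i$). In case $\tl c_k\notin\ZZ$ the paper notes $H^*(Z_{k-1};\tl{\LL})=0$ by K\"unneth (the $\CC^*_{t_k}$ factor kills it) and runs one Morse step $Z_{k-1}\rightsquigarrow Z_k$ with $g=|\tl P_k\cdot x^{-a}t_k^{-a_{n-k+1}}|$. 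In case $\tl c\in\ZZ^k$ it iterates the same Morse step from $Z_1$ down to $Z_k$, landing in the range $[n-k,n-1]$, and finishes with Artin.

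What your approach buys: you avoid the lift entirely, and in case (ii) your Gysin reduction to Theorem \ref{NVTM} for all $Z_J\subset T_0$ is more transparent than the paper's terse ``repeating this argument''. What the paper's approach buys: Lemma \ref{VL} and Theorem \ref{NVTM} apply verbatim, because after the lift the local system lives on the whole torus $T$; in your case (i) you must run the proof of Lemma \ref{VL} with $Rj_*\LL$ (a perverse sheaf on $T_0$, not a local system on $M$) to control the stratified critical points on $\bigcup_{i<k}P_i^{-1}(0)$, and you need the separate circle-bundle argument at $t_0\to 0^+$ where the paper instead just has $H^*(Z_{k-1};\tl{\LL})=0$ by K\"unneth. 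Both are legitimate; the paper's lift trades a small amount of setup for a more uniform endgame.
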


\begin{proof} 
Set $T=T_0 \times 
( \CC^*)^k_{t_1, \ldots, t_k} 
\simeq ( \CC^*)^n_{x,t}$ and consider the 
Laurent polynomials 
\begin{equation} 
\tl{P_i}(x,t)=t_i -P_i(x) 
\qquad (1 \leq i \leq k) 
\end{equation}
on $T$. For $1 \leq i \leq k$ we set also 
\begin{equation} 
Z_i = \{ (x,t) \in T 
\ | \ \tl{P_1}(x,t)= \cdots 
= \tl{P_i}(x,t)=0 \}. 
\end{equation}
We define a local system $\tl{\LL}$ on 
$T$ by 
\begin{equation} 
\tl{\LL} = 
\CC_T x_1^{c_1} 
\cdots x_{n-k}^{c_{n-k}} 
t_1^{\tl{c_1}} \cdots t_k^{\tl{c_k}}.
\end{equation}
Then $Z_k \simeq W$ 
and we have isomorphisms 
\begin{equation} 
H^j(W ; \LL ) \simeq 
H^j (Z_k ; \tl{\LL} ) 
 \qquad (j \in \ZZ ). 
\end{equation} 
First let us consider the case where 
$ \tl{c} = ( \tl{c_1}, 
\ldots, \tl{c_k}) \notin \ZZ^k$. 
In this case, without loss of generality 
we may assume that $\tl{c_k} \notin \ZZ$. 
Then by the K\"unneth formula, 
for $i=1,2, \ldots, k-1$ we have the 
vanishings 
\begin{equation}  
H^j (Z_i ; \tl{\LL} ) 
 \simeq 0 \qquad (j \in \ZZ ). 
\end{equation} 
Moreover we can naturally identify $Z_{k-1} 
\subset T$ with $(T_0 \setminus 
\cup_{i=1}^{k-1} P_i^{-1}(0)) \times 
\CC^*_{t_k}$. Consider $\tl{P_k}$ as a 
Laurent polynomial on $T_1=T_0 \times 
\CC^*_{t_k} \simeq ( \CC^*)^{n-k+1}$. 
Note that we have $\dim NP(\tl{P_k}) 
=n-k+1= \dim T_1$. 
By taking a sufficiently generic 
\begin{equation}  
( a_1, \ldots,  a_{n-k}, 
 a_{n-k+1} ) \in \Int  NP(\tl{P_k}) 
\subset \RR^{n-k+1} 
\end{equation} 
we define a real-valued function $g$ on 
$T_1=T_0 \times \CC^*_{t_k}$ by 
\begin{equation}  
g(x, t_k)= \left| 
\tl{P_k} (x, t_k) \times 
x_1^{- a_1} \cdots 
x_{n-k}^{- a_{n-k}} 
t_k^{- a_{n-k+1}}  \right|. 
\end{equation} 
Then by applying Lemma \ref{VL} to the 
Morse function $g: T_1=T_0 \times \CC^* 
 \longrightarrow \RR$ and arguing 
as the proof of Theorem \ref{NVTM} 
we obtain the desired concentration 
\begin{equation} 
H^j (Z_k ; \tl{\LL} ) 
\simeq 0 \qquad (j \not= n-k). 
\end{equation} 
The proof for the remaining case where 
$(c, \tl{c} )=
(c_1, \ldots, c_{n-k}, \tl{c_1}, 
\ldots, \tl{c_k}) \notin \ZZ^n$ and 
$ \tl{c} = ( \tl{c_1}, 
\ldots, \tl{c_k}) \in \ZZ^k$ 
is similar. In this case, $Z_1 \subset T$ 
is isomorphic to the product 
$Z_1^{\prime} \times ( \CC^*)^{k-1}$ 
for a hypersurface $Z_1^{\prime}$ in 
$T_0 \times \CC^*_{t_1}$ and $\tl{\LL}$ 
is isomoprhic to the pull-back of a 
local system on $T_0 
\times \CC^*_{t_1}$. 
Hence by the K\"unneth 
formula and the proof 
of Theorem \ref{NVTM} 
we obtain the concentration 
\begin{equation} 
H^j (Z_1 ; \tl{\LL} ) 
\simeq 0 \qquad (j \not= n-k, \ldots, n-1). 
\end{equation} 
Repeating this argument with the help of 
Lemma \ref{VL} and the proof 
of Theorem \ref{NVTM} we obtain also 
\begin{equation} 
H^j(W ; \LL ) \simeq 
H^j (Z_k ; \tl{\LL} ) 
\simeq 0 \qquad (j \not= n-k, \ldots, n-1). 
\end{equation} 
Then the assertion 
is obtained by applying 
Artin's vanishing theorem to 
the $(n-k)$-dimensional affine variety 
$Z_k \subset T$. This completes the proof. 
\end{proof}

\end{document}